\newcommand{\vex}[1]{#1_{\text{vex}}} 
\newcommand{\ave}[1]{#1_{\text{ave}}} 
\newcommand{\inhyperplane}{in-hyperplane}
\newcommand{\outhyperplane}{out-hyperplane}
\newtheorem{theorem}{Theorem}
\newtheorem{lemma}{Lemma}
\newtheorem{corollary}{Corollary}
\theoremstyle{definition}
\newtheorem{definition}{Definition}
\newtheorem{example}{Example}
\theoremstyle{remark}
\newtheorem{remark}{Remark}
\begin{document}
\title{Convex envelopes for ray-concave functions}
\thanks{The research leading to these results received funding from grants ANID/CONICYT-Fondecyt Regular 1200809 (J.B., E.M.) and ANID/CONICYT-Fondecyt Iniciaci\'on 11190515 (G.M.).}
\author{Javiera Barrera}
\address{Faculty of Engineering and Sciences,  Universidad Adolfo Ib\'a\~nez, Santiago, Chile}
\email{javiera.barrera@uai.cl}
\author{Eduardo Moreno}
\address{Faculty of Engineering and Sciences,  Universidad Adolfo Ib\'a\~nez, Santiago, Chile}
\email{eduardo.moreno@uai.cl} 
\author{Gonzalo Mu\~noz}
\address{Institute of Engineering Sciences, Universidad de O'Higgins,  Rancagua, Chile}
\email{gonzalo.munoz@uoh.cl}

\begin{abstract}
Convexification based on convex envelopes is ubiquitous 
in the non-linear optimization literature. Thanks to considerable efforts of the 
optimization community for decades, we are able to compute the convex envelopes of a considerable number of functions that appear in practice, and thus obtain tight and tractable approximations to challenging problems. We contribute to this line of work by considering a family of functions that, to the best of our knowledge, has not been considered before in the literature. We call this family \emph{ray-concave} functions. We show sufficient conditions that allow us to easily compute closed-form expressions for the convex envelope of ray-concave functions over arbitrary polytopes. With these tools, we are able to provide new perspectives to previously known convex envelopes and derive a previously unknown convex envelope for a function that arises in probability contexts.
\end{abstract}
\keywords{Convex envelopes, Nonlinear programming, Convex optimization}
\subjclass[2020]{Primary: 90C26, 90C25}
\maketitle

\section{Introduction}

Strong convex relaxations of complex optimization problems is
a key component in the development of tractable computational techniques in the field.
In this regard, a popular approach has been the study of \emph{convex underestimators} of functions, that is, given an arbitrary function $f$, find a convex function $f'$ such that $f'(x) \leq f(x)$ $\forall x\in P$, where $P$ is a given convex set.
Such function can be used to \emph{relax} a sub-level set $\{x\in P :\, f(x) \leq 0 \}$ with the convex set $\{x\in P\, :\, f'(x)\leq 0\}$, and thus obtain a computationally tractable approximation.
The pointwise largest convex underestimator is known as the \emph{convex envelope of $f$ over $P$}, and the optimization community has allocated considerable efforts on finding such envelopes for various classes of functions $f$ and sets $P$.

\begin{definition} The convex envelope of a function $f$ on a subset $P$  is given by 
\begin{align*}
\vex{f}(x) &= \sup\{ g(x) \, :\,  g \text{ is convex and } g(x) \leq f(x) \ \forall x\in P\} 
\end{align*}
\end{definition}

In this work, we consider $P$ a polytope and study the convex envelope of a family of functions that are required to be convex of the facets of $P$, and what we term as \emph{ray-concavity}.
\begin{definition}\label{def:rayconcave}
A function $f:P\to \mathbb{R}$ is \emph{ray-concave} over $P$ if, for every $x\in P$, the function $f$ restricted to $\{\alpha x \, :\, \alpha \geq 0\} \cap P$ is concave.
\end{definition}
We present sufficient conditions for deriving simple closed-form formulas of the convex envelopes of ray-concave functions over arbitrary polytopes in any dimension.
%

Our result is closely related to known results for general functions over polytopes. To the best of our knowledge, the vast majority of the work producing closed-form formulas of convex envelopes in arbitrary dimension either require a rectangular domain, or require $f$ to be \emph{edge-concave}, in which case the convex envelope is polyhedral\footnote{A function is \emph{polyhedral} if its epigraph is a polyhedron.}.
With our result, through the concept of ray-concavity, we are able to explicitly construct convex envelopes which are not necessarily polyhedral, in any dimension, for a new family of functions that has not been explicitly exploited before in the literature.
%

Our result yields a previously unknown convex envelope of a function that appears in probability contexts.
\begin{example}\label{ex:probability_function}
The function $f(x,y) = -\frac{x\cdot y}{x+y-x\cdot y}$ is ray-concave over any box $[0,u_x]\times [0,u_y]$ with  $u_x,u_y\leq 1$.
\end{example}
This function is one of the the main motivations behind this work.
Additionally, many functions for which their convex envelope formulas are known exhibit ray-concavity (e.g., $f(x_1,x_2)=-x_1 x_2$ or $f(x_1,x_2)=x_1/x_2$ for $x_1,x_2>0$), and our result provide a new perspective on these expressions and alternative derivations.

\section{Literature review}

The literature of convex envelopes is vast. Probably the most well-known and used convex envelope is that of the bilinear function $f(x_1,x_2) = x_1x_2$ over a rectangular region, for which its convex (and concave) envelope is obtained through the McCormick envelopes~\cite{mccormick1976computability,al1983jointly}. 

To the best of our knowledge, the first method capable of constructing the convex envelope for a family of functions (as opposed to a particular function) is provided in \cite{tawarmalani2001semidefinite}. Based on disjunctive programming, they show a general expression of the convex envelope for functions that are concave on one variable, convex on the rest, and defined over a rectangular region. Later on, in~\cite{jach2008convex} the authors show how to compute the evaluation of $\vex{f}$ when $f$ is an $(n-1)$-convex function (i.e., $f$ is convex whenever one variable is fixed to any value) over a rectangular domain. The function evaluation requires the resolution of a convex optimization problem. 
In~\cite{khajavirad2012convex,khajavirad2013convex}, the authors formulate the convex envelope of a lower semi-continuous function over a compact set as a convex optimization problem. They use this to compute, explicitly, the convex envelope for various functions that are the product of a convex function and a component-wise concave function, over a box.  
We remark that in all the aforementioned cases, the convex envelopes may be non-polyhedral, and that explicit calculations consider hyper-rectangular domains.

Considerable efforts have been put in the case of polyhedral convex envelopes. In \cite{tardella2004existence,tardella2008existence}, it is shown that \emph{edge-concavity} of a function $f$ (i.e., concavity over all edge directions of a polytope $P$) implies that the convex envelope of $f$ over $P$ is polyhedral. The construction of these convex envelopes is studied in \cite{meyer2005convex}.
In \cite{rikun1997convex}, necessary and sufficient conditions for the convex envelope to be polyhedral are also provided, and they are used to obtain the convex envelope of a multilinear function over the unit box (see also \cite{sherali1997convex,ryoo2001analysis}).
In \cite{meyer2004trilinear}, the authors provide explicit expressions for the facets of the convex envelope of a trilinear monomial over a box.
In \cite{bao2009multiterm}, the authors design a cutting plane approach to generate, on-the-fly, the convex envelope of a bilinear function over a box. The strength of the convex underestimator of a bilinear function that is obtained from using a term-wise convex envelopes is analyzed in \cite{luedtke2012some}.

Other known results include the convex envelopes of odd-degree monomials over an interval \cite{liberti2003convex} and the fractional function $f(x_1,x_2)=x_1/x_2$ over a rectangle \cite{zamora1999branch,tawarmalani2001semidefinite}. Recently, in \cite{locatelli2020convex} the author computed the convex envelope of cubic functions in two dimensions, over a rectangular region.

While a big portion of these works involve rectangular regions, there exist important work considering sets beyond boxes in two dimensions. In \cite{sherali1990explicit}, the authors derive explicit formulas for the convex envelope of bilinear bivariate functions over a class special polytopes called $D$-polytopes. 
The case of the fractional function $x_1/x_2$ over a trapezoid is studied in \cite{kuno2002branch}. This was expanded in \cite{benson2004construction}, where convex envelopes for bilinear and fractional bivariate functions over quadrilaterals are constructed. 
The convex envelope of a bilinear bivariate function over a triangle has been carefully studied in \cite{sherali1990explicit,linderoth2005simplicial,anstreicher2010computable}. Such envelopes were tested computationally in \cite{linderoth2005simplicial} within a branching scheme for QCQPs with positive results. 
In \cite{locatelli2014convex} it is shown how to evaluate the convex envelope, and obtain a supporting hyperplane, for bivariate functions over arbitrary polytopes. This approach involves solving a low-dimensional convex problem. This procedure was refined in \cite{locatelli2018convex}, by shifting the calculations to the solution of a KKT system. These last techniques were extensively tested in \cite{muller2020using} to improve general-purpose optimization routines. In \cite{locatelli2016polyhedral}, the author characterizes the convex envelope of various bivariate functions (including the bilinear and fractional functions) over arbitrary polytopes using a polyhedral sub-division of the polytopes. In some cases, the convex envelope in each element of the sub-division can be given explicitly.

To the best of our knowledge, there is no construction that can provide a closed-form formula for the convex envelope of Example \ref{ex:probability_function}, and almost no construction allowing the explicit computation of non-polyhedral convex envelopes over polytopes beyond boxes in dimension $n\geq 3$. 
The only exception that we are aware of is \cite{tawarmalani2013explicit}. In this work, the authors derive explicit convex and concave envelopes of several functions on sub-sets of a hyper-rectangle, which are obtained through polyhedral subdivisions. In this case the authors can obtain, in closed form, the convex envelope of disjunctive functions of the form $xf(y)$, and the concave envelope of \emph{concave-extendable} supermodular functions. This may produce non-polyhedral envelopes. We remark some similarities with their construction below, however, it is worth noting that the results in \cite{tawarmalani2013explicit} cannot directly provide a formula for $\vex{f}$ for the function $f$ in Example \ref{ex:probability_function}.
On one hand, such function $f$ does not fit the disjunctive framework of \cite{tawarmalani2013explicit}, so we cannot apply their convex envelope construction. On the other hand, one could consider using their concave envelope results with $-f$, thus effectively constructing $\vex{f}$. However, we show below that the convex envelope of such $f$ requires a polyhedral partition that introduces new vertices in the box, while the construction of the concave envelope for concave-extendable functions (see \cite[Corollary 2.8]{tawarmalani2013explicit}) is based solely on the original vertices of the polytope.



\section{Convex envelopes for ray-concave functions}

Overall, we consider a polytope $P\subset \mathbb{R}^n$ with non-empty interior. 



\begin{definition} For any $v\in \mathbb{R}^n\setminus\{0\}$ such that $\exists\, \alpha \geq 0, \alpha v \in P$ (i.e., the ray defined by $v$ intersects the polytope) we define
\begin{align*}
v^+ &= \alpha^+ v \text{, where } \alpha^+ = \arg\max\{\alpha\, :\, \alpha \geq 0, \, \alpha v \in P \}& \text{ and }\\
v^- &= \alpha^- v \text{, where } \alpha^- = \arg\min\{\alpha\, :\, \alpha \geq 0, \, \alpha v \in P \}.&
\end{align*}
In simple words, $v^+$ and $v^-$ are the intersections of the ray given by $v$ with the boundary of $P$ (see Figure~\ref{fig:notation}). Note that if $0\in P$ then $v^-=0$ for all $v\in P$.
\end{definition}
We remark that $v^\pm$ are continuous functions of $v$. Below, we emphasize this functional aspect when taking derivatives.

Using this definition, a function $f:P \rightarrow \mathbb{R}$ is ray-concave iff $f$ restricted to the segment $[v^-,v^+]$ is concave for all $v$ where $v^{\pm}$ is well defined.
%
%
%
Our main results provides an explicit characterization for the convex envelope of ray-concave functions that are convex on the facets of $P$.

\begin{theorem}\label{thm:main}
 Let $f:P\to \mathbb{R}$ be a continuously differentiable and ray-concave function over a polytope $P$, such that $f$ is convex over the facets of $P$. 
 Let $g:P\to \mathbb{R}$ be defined as
 \begin{align}
	g(v) = \alpha_v f(v^-) + (1-\alpha_v) f(v^+), \label{eq:def}
\end{align}
where $\alpha_v\in[0,1]$ is such that $v = \alpha_v v^- + (1-\alpha_v) v^+$. If $g$ is positively homogeneous, then $\vex{f} = g$.
\end{theorem}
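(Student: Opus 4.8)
The plan is to establish the two inequalities $\vex{f} \le g$ and $g \le \vex{f}$, after first checking that $g$ is a legitimate competitor in the supremum defining $\vex{f}$, i.e., that $g$ is convex and underestimates $f$.

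First I would verify $g \le f$ on $P$. Fix $v \in P$. By definition $v$ lies on the segment $[v^-, v^+]$ with $v = \alpha_v v^- + (1-\alpha_v) v^+$, and $g(v)$ is exactly the corresponding convex combination of the values $f(v^-)$ and $f(v^+)$. Since $f$ is ray-concave, its restriction to $[v^-,v^+]$ is concave, so the chord lies below $f$; hence $g(v) \le f(v)$. This step is routine. Next I would argue that $g$ is convex. This is the technical heart of the first half: $g$ is built from the two boundary maps $v \mapsto v^\pm$ (which are continuous, piecewise-linear-in-a-radial-sense functions of $v$) composed with $f$ restricted to the facets of $P$, which is convex by hypothesis. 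The hypothesis that $g$ is positively homogeneous is what should make this tractable — a positively homogeneous function is convex if and only if it is subadditive (equivalently, its restriction to any line segment not through the origin is controlled correctly), so I would aim to reduce convexity of $g$ to a statement about its behavior on rays and on the facets, where ray-concavity of $f$ and facet-convexity of $f$ respectively come into play. Concretely, on each ray $g$ is \emph{linear} (it is the chord of $f$), and on each facet $g$ agrees with $f$, which is convex there; combining these via positive homogeneity should yield global convexity. I expect this to be the main obstacle: carefully handling the cones over the facets, the regions where $v^-=0$ versus $v^- \ne 0$, and the continuity/differentiability bookkeeping at the boundaries between these cones.

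Having shown $g$ is convex and $g \le f$, the inequality $g \le \vex{f}$ is immediate from the definition of the convex envelope as the supremum of such functions. For the reverse inequality $\vex{f} \le g$, I would use the fact that any convex underestimator $h$ of $f$ must satisfy $h(v^\pm) \le f(v^\pm)$, and then by convexity of $h$ along the segment $[v^-,v^+]$,
\begin{align*}
h(v) \le \alpha_v h(v^-) + (1-\alpha_v) h(v^+) \le \alpha_v f(v^-) + (1-\alpha_v) f(v^+) = g(v).
\end{align*}
Taking the supremum over all such $h$ gives $\vex{f}(v) \le g(v)$. Combined with the previous paragraph this yields $\vex{f} = g$. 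Note this second half does not even use ray-concavity or differentiability — those hypotheses are needed only to guarantee that $g$ itself is a valid (convex, underestimating) competitor, which is exactly why the convexity-of-$g$ step is where the real work lies; the positive-homogeneity assumption in the statement is precisely the extra ingredient that closes that gap.
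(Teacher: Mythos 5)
Your endgame is correct and matches the paper exactly: $g\le f$ follows from concavity of $f$ on $[v^-,v^+]$, and maximality follows because any convex underestimator $h$ restricted to that segment must lie below the chord, which is $g$. That part needs neither differentiability nor the homogeneity hypothesis, as you observe.

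The genuine gap is the convexity of $g$, which you correctly identify as the heart of the matter but then only gesture at. The heuristic ``$g$ is linear on rays, agrees with the convex function $f$ on the facets, and is positively homogeneous, hence convex'' is not a proof, and there is no short route from those three facts to convexity (indeed, the paper's Corollary~\ref{cor:f_homogeneous}, which is precisely the statement that a positively homogeneous function convex on the facets is convex, is derived as a \emph{consequence} of the theorem, not used to prove it). Two concrete pieces are missing. First, convexity of $g$ on each region $B_{ij}=\mbox{cone}(F_i)\cap\mbox{cone}(F_j)\cap P$: given $z=\lambda v+(1-\lambda)w$, one must express $z^\pm$ as convex combinations of $v^\pm$ and $w^\pm$ with explicit weights $\rho=\lambda\,{a^+}^\intercal v/{a^+}^\intercal z$ and $\varepsilon=\lambda\,{a^-}^\intercal v/{a^-}^\intercal z$, apply facet-convexity of $f$, and then use positive homogeneity of $g$ to recognize the resulting combination $\gamma\frac{{a^-}^\intercal v}{{a^-}^\intercal z}f(v^-)+(1-\gamma)\frac{{a^+}^\intercal v}{{a^+}^\intercal z}f(v^+)$ as exactly $g(v)$ (and similarly for $w$); this rescaling step is where homogeneity is actually consumed, and it is not visible in your sketch. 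Second, $g$ is not differentiable across the boundaries between adjacent regions $B_s$ and $B_t$, so region-wise convexity does not immediately give global convexity: one must compute the one-sided gradients, show $\nabla g_{B_s}(v)^\intercal d\le\nabla g_{B_t}(v)^\intercal d$ for directions $d$ pointing from $B_s$ into $B_t$, and this is where ray-concavity enters quantitatively, through the signs $\delta^-=(\nabla f(v^*)-\nabla f(v^-))^\intercal v^-\le 0$ and $\delta^+=(\nabla f(v^*)-\nabla f(v^+))^\intercal v^+\ge 0$ multiplying the jumps $a_s^\pm-a_t^\pm$. Your proposal acknowledges that ``bookkeeping at the boundaries'' is needed but supplies no mechanism for why the kinks of $g$ bend the right way; without that, the argument does not close.
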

\begin{remark}\label{remark:first}
In Section \ref{sec:positivehom} we provide more insights on the positively homogeneous requirement. For example, we show that whenever $0\in P$,  $g$ is positively homogeneous iff $f(0) = 0$. The latter is not a restrictive requirement, as we can compute the convex envelope of $f - f(0)$ instead.
\end{remark}

A linear interpolation of a similar type as \eqref{eq:def} has been considered in multiple articles. The general result in \cite{jach2008convex}, for example, shows that to evaluate $\vex{f}$ for an edge-convex function $f$ over a box, it suffices to consider the lines passing through $x$ where the function $f$ is concave, similarly to our result. Each evaluation involves solving an optimization problem (see \cite[Theorem 3.1]{jach2008convex}). Another example is given by \cite{tawarmalani2013explicit}, who construct envelopes explicitly using secants of a similar type. In  \cite{linderoth2005simplicial}, the author also uses such lines in his construction of convex envelopes of the bilinear function over triangles.

In our case, by considering ray-concavity, we only need to consider secants on the rays emanating from the origin in the envelope construction.\\

To prove Theorem \ref{thm:main}, we first provide three lemmas about the convexity of the function $g$ over different regions of the domain. We divide the polytope $P$ into subregions using the rays that pass through the vertices of $P$. 

\begin{definition}\label{def:regions}
Let $\mathcal{F}$ be the set of facets of $P$. 
If $0\notin P$, for each pair of facets $F_i,F_j\in\mathcal{F}$ we define the region
\[ B_{ij} = \{v\in P : v^-\in F_i, v^+\in F_j \}.\]
We refer to the hyperplane containing the facet $F_i$ as the \emph{\inhyperplane{} of $B_{ij}$}, and to the hyperplane containing the facet $F_j$ as the \emph{\outhyperplane{} of $B_{ij}$}.
Alternatively, if $0\in P$, for each facet $F_j\in \mathcal{F}$ we define the region
\[ B_{0j} = \{v\in P : v^+\in F_j \}.\]
In this case we only define the \emph{\outhyperplane{}} of $B_{0j}$.
We denote by $\mathcal{B}$ the set of all full-dimensional regions $B_{ij}$. 
\end{definition}

In Figure \ref{fig:subdivisions} we illustrate the regions we consider in $\mathcal{B}$, which clearly form a sub-division of $P$.
Note that if $B_{ij}\neq \emptyset$ then $B_{ji} = \emptyset$.
\begin{figure}
    \centering
    \begin{subfigure}[t]{0.45\textwidth}
\includegraphics[height=5cm]{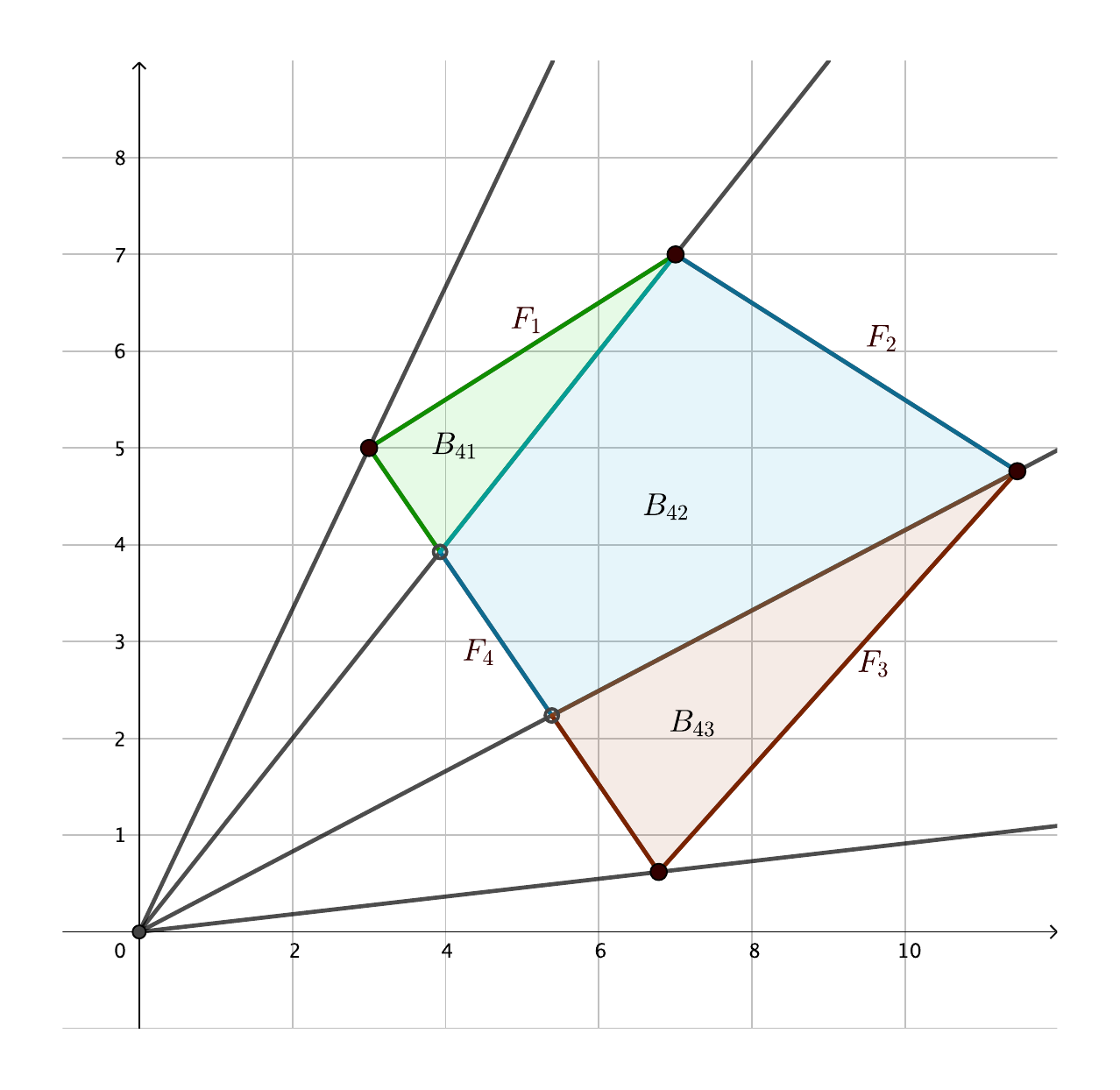}
\end{subfigure}
\begin{subfigure}[t]{0.45\textwidth}
\includegraphics[height=5cm]{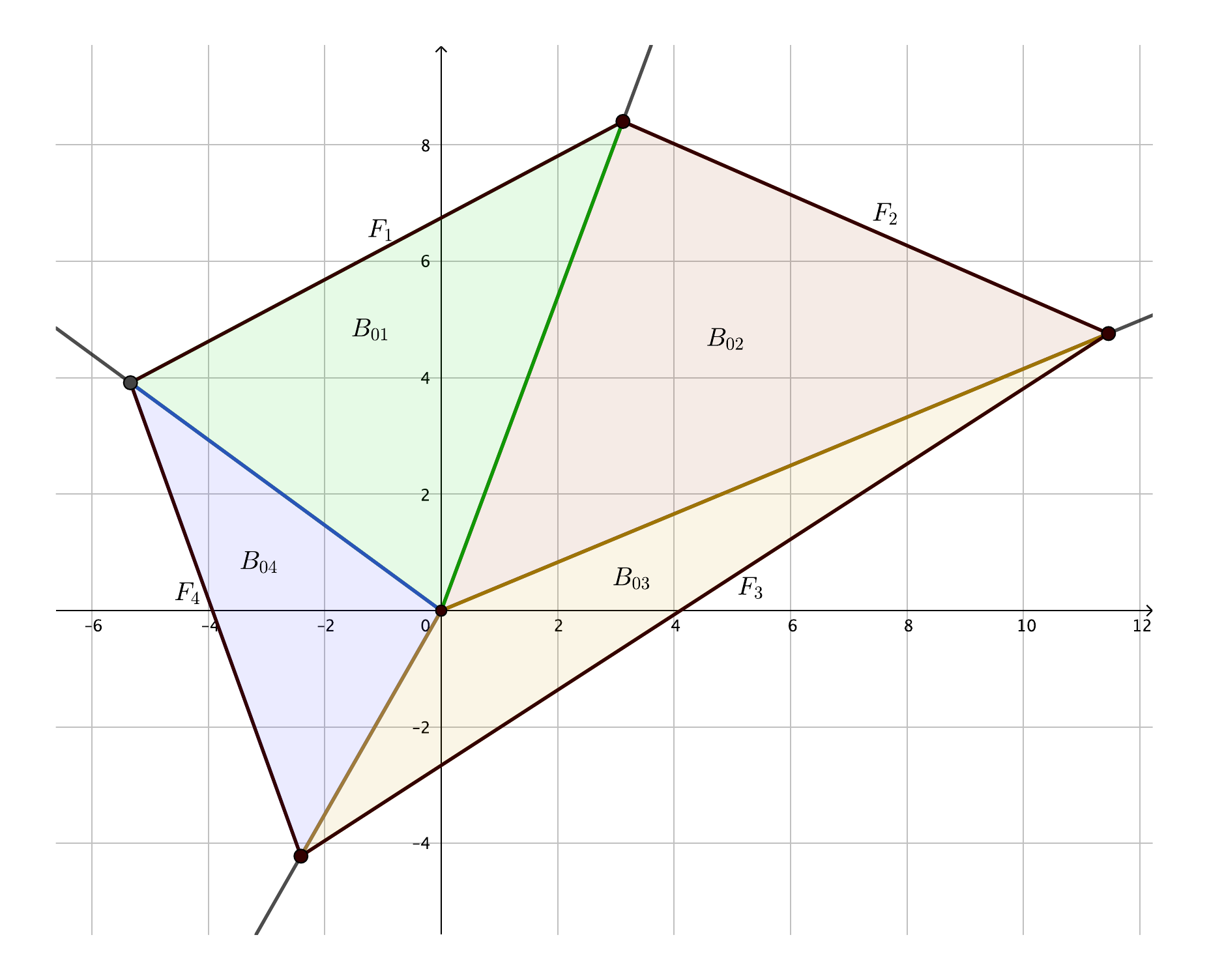}
\end{subfigure}
\caption{Polyhedral sub-division of $P$ into regions $\mathcal{B}$ according to intersection of rays with the boundary}
    \label{fig:subdivisions}
\end{figure}
Also note that every $B_{ij}\in \mathcal{B}$ is polyhedral: for example, in the case $0\not\in P$, it is not hard to see that
\begin{equation}
    \label{eq:bijascones}
    B_{ij} = \mbox{cone}(F_i) \cap \mbox{cone}(F_j) \cap P.
\end{equation}
Polyhedrality follows since both $F_i$ and $F_j$ are polyhedra. 

\begin{remark}\label{vpm}
For a given region $B_{ij}\in \mathcal{B}$ we can provide an explicit formula for $v^\pm$.
In fact, note that we can assume that the \outhyperplane{} of $B_{ij}$ has the form ${a^+}^\intercal \vec{x} =1$. Since $v$ and $v^+$ lie on the same ray, we obtain 
$v^+ = \frac{1}{{a^+}^\intercal v} v$
for any $v\in B_{ij}$.
Similarly, for the case $0\notin P$, we may assume that the \inhyperplane{} of $B_{ij}$ has the form ${a^-}^\intercal \vec{x} =1$, and then $v^- = \frac{1}{{a^-}^\intercal v} v$ for any $v\in B_{ij}$.

Moreover, since $v=\alpha_v v^- + (1-\alpha_v) v^+$, this implies that
\begin{equation}
    \frac{\alpha_v}{{a^-}^\intercal v} + \frac{1-\alpha_v}{{a^+}^\intercal v} = 1 \label{eq:relAlphaHiperplanes}
\end{equation}
\end{remark}

\subsection{Convexity and differentiability over a single region}

To show convexity of $g$, we first prove that under the homogeneity assumption of Theorem \ref{thm:main}, $g$ is convex in each region $B\in\mathcal{B}$.
%
%
\begin{lemma}
Let $B\in\mathcal{B}$ and let $g: B\subset \mathbb{R}^n\rightarrow \mathbb{R}$ as defined in \eqref{eq:def}.
%
%
If $g$ is positively homogeneous, then $g$ is convex in $B$.
\end{lemma}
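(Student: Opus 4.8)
The plan is to show that, on a single full-dimensional region $B=B_{ij}\in\mathcal{B}$, positive homogeneity forces $g$ to coincide on $B$ with a perspective transform of the restriction of $f$ to a facet of $P$; since perspectives of convex functions are convex, this proves the lemma. I treat the case $0\notin P$ in detail; the case $0\in P$ is analogous and easier.

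First I would make the region explicit via Remark~\ref{vpm}: writing the in- and out-hyperplanes of $B$ as ${a^-}^\intercal x = 1$ and ${a^+}^\intercal x = 1$, and setting $s:={a^-}^\intercal v$, $t:={a^+}^\intercal v$, we have $v^- = v/s\in F_i$ and $v^+ = v/t\in F_j$ for every $v\in B$, with $s>0$, and $\alpha_v$ determined by \eqref{eq:relAlphaHiperplanes}. Note that $s=t$ forces $v^-=v^+$, i.e.\ $v\in F_i\cap F_j$, so $s\neq t$ throughout the interior of $B$. Substituting these expressions into \eqref{eq:def} yields a closed form for $g$ on $B$ in terms of $s$, $t$, $f(v^-)$ and $f(v^+)$.

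The key step is to exploit homogeneity. For $v$ in the interior of $B$ and $\lambda$ near $1$ we have $\lambda v\in B$ and $(\lambda v)^\pm = v^\pm$ (same ray), so $g(\lambda v)=\lambda g(v)$ must hold as an identity in $\lambda$. A short computation (using that $\alpha_{\lambda v}$ solves $\alpha_{\lambda v}/s + (1-\alpha_{\lambda v})/t=\lambda$) gives
\[
 g(\lambda v)-\lambda g(v) \;=\; \frac{\lambda-1}{t-s}\,\big(s\,f(v^-)-t\,f(v^+)\big),
\]
so homogeneity on $\operatorname{int}(B)$ is equivalent to $s\,f(v^-)=t\,f(v^+)$ there. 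Writing $K$ for this common value, \eqref{eq:def} together with \eqref{eq:relAlphaHiperplanes} then collapses to $g(v)=K\big(\alpha_v/s+(1-\alpha_v)/t\big)=K=s\,f(v^-)$, i.e.
\[
 g(v)=({a^-}^\intercal v)\,f\big(v/({a^-}^\intercal v)\big)\qquad\text{for all }v\in B,
\]
the identity extending from $\operatorname{int}(B)$ to all of $B$ by continuity of $g$ (which follows from $f\in C^1$ and continuity of $v^\pm$, $\alpha_v$). Intuitively, homogeneity forces the secant of $f$ over $[v^-,v^+]$ to pass through the origin, so $g$ is radially linear and pinned down by one endpoint value.

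Finally, I would recognize the right-hand side as the perspective of the restriction of $f$ to the hyperplane $\{{a^-}^\intercal x=1\}$ containing $F_i$: in linear coordinates with ${a^-}^\intercal v=v_1$ it equals $v_1\,h(v_2/v_1,\dots,v_n/v_1)$ with $h:=f(1,\cdot)$ convex (since $f$ is convex on $F_i$), which is convex on $\{v_1>0\}$. As $v=s\,v^-$ with $v^-\in F_i$ and $s>0$, we have $B\subseteq\operatorname{cone}(F_i)\subseteq\{v_1>0\}$ and $B$ is convex, so $g$ is convex on $B$. For $0\in P$ the region is $B_{0j}$, $v^-=0$, homogeneity forces $f(0)=0$ (cf.\ Remark~\ref{remark:first}), and the same substitution gives $g(v)=({a^+}^\intercal v)\,f(v/({a^+}^\intercal v))$, the perspective of the convex function $f|_{F_j}$, convex on $B\subseteq\operatorname{cone}(F_j)$. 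I expect the main obstacle to be the key step — seeing that homogeneity collapses the two-term interpolation \eqref{eq:def} into a single perspective transform; after that, convexity follows from standard facts, with only minor bookkeeping for the degenerate locus $s=t$ and for the inclusion $B\subseteq\operatorname{cone}(F_i)$.
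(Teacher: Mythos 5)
Your proof is correct, but it takes a genuinely different route from the paper's. The paper verifies the convexity inequality $g(z)\le\lambda g(v)+(1-\lambda)g(w)$ directly from \eqref{eq:def}: it writes $z^{\pm}$ as convex combinations of $v^{\pm}$ and $w^{\pm}$, applies convexity of $f$ on \emph{both} the in-facet and the out-facet, and then uses positive homogeneity to re-express $g(v)$ and $g(w)$ as $\gamma \tfrac{{a^-}^\intercal v}{{a^-}^\intercal z}f(v^-)+(1-\gamma)\tfrac{{a^+}^\intercal v}{{a^+}^\intercal z}f(v^+)$ (and the analogue for $w$) so that the two sides match. You instead first show that homogeneity along each ray forces ${a^-}^\intercal v\cdot f(v^-)={a^+}^\intercal v\cdot f(v^+)$, so that on $B$ the two-term interpolation collapses to the perspective $({a^-}^\intercal v)\,f\bigl(v/({a^-}^\intercal v)\bigr)$ of $f$ restricted to the in-facet (resp.\ $({a^+}^\intercal v)\,f\bigl(v/({a^+}^\intercal v)\bigr)$ when $0\in P$), and then invoke the standard convexity of perspective functions; your $\lambda$-identity checks out, and the domain issue is handled by \eqref{eq:bijascones}, which gives $B\subseteq\mbox{cone}(F_i)$. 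Your key identity is exactly the content of the two lemmas of Section~\ref{sec:positivehom}, which the paper only establishes \emph{after} the main theorem; pulling it forward yields a shorter, more structural proof and reveals that convexity of $f$ on a single one of the two facets bounding $B$ already suffices for this lemma (the two perspectives coincide), whereas the paper's argument uses both facets. The trade-off is the bookkeeping you already flag (the degenerate locus ${a^-}^\intercal v={a^+}^\intercal v$ and the extension from $\operatorname{int}(B)$ to $B$), all of which is routine, against the paper's computation, which stays elementary and self-contained within the region.
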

\begin{proof}

%
%

Let $v,w\in B$ and let $z=\lambda v + (1-\lambda) w$ for $\lambda\in[0,1]$.  By convexity of the region, $z\in B$ as well. To prove the convexity of $g$ over $B$, we show that $g(z) \leq \lambda g(v) + (1-\lambda) g(w)$.

Recall that $v^+$ and $w^+$ belong to the same facet defining $B$, and $v^-$ and $w^-$ are either $0$ (if $0\in P$) or belong to the same facet defining $B$ (if $0\notin P$). Hence, there exist $\gamma, \varepsilon, \delta \in [0,1]$ such that:
\begin{align*}
    z&=\gamma z^- + (1-\gamma) z^+\\
    z^- &= \varepsilon v^- + (1-\varepsilon) w^-\\
    z^+ &= \rho v^+ + (1-\rho) w^+
\end{align*}
In Figure~\ref{fig:notation} we illustrate these vectors.

\begin{figure}[tbp]
\centering \includegraphics[width=.7\linewidth]{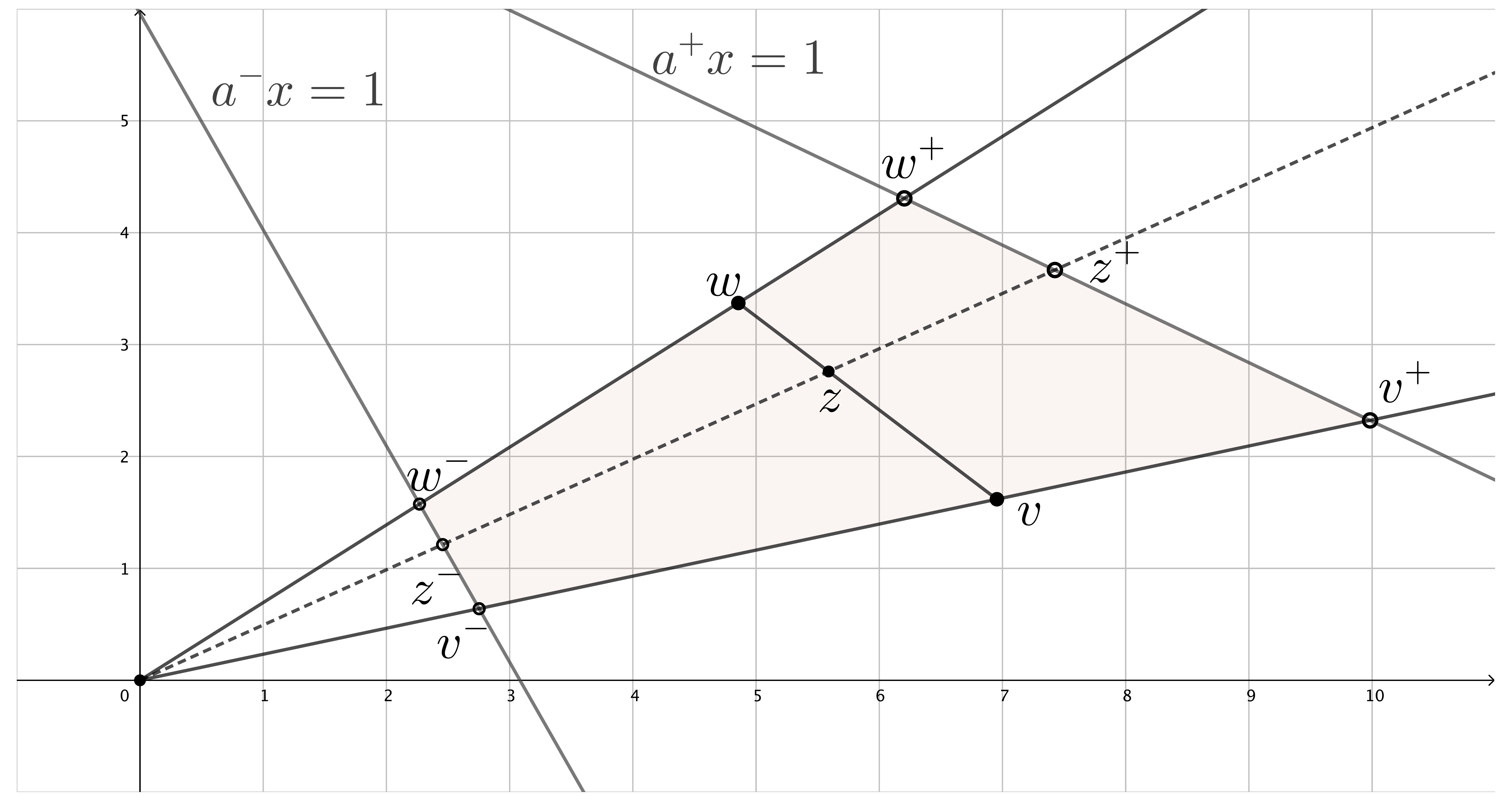}%
\caption{Notation for Lemma 1}\label{fig:notation}
\end{figure}

Since $g(z) = \gamma f(z^-) + (1-\gamma) f(z^+)$ and $f$ is convex on the facets containing $\{v^+,z^+,w^+\}$ and $\{v^-,z^-,w^-\}$ (if $0\notin P$), we know that
\begin{align}
g(z) &= \gamma f(z^-) + (1-\gamma) f(z^+) \\
& \leq \gamma (\varepsilon f(v^-) + (1-\varepsilon) f(w^-)) + (1-\gamma) (\rho f(v^+) + (1-\rho) f(w^+)) \\
& = \gamma \varepsilon f(v^-) +(1-\gamma) \rho f(v^+) + \gamma (1-\varepsilon) f(w^-) +   (1-\gamma)(1-\rho) f(w^+) \label{eq:despejada}
\end{align}


Let ${a^+}^\intercal \vec{x}=1$ be the \outhyperplane{} of $B$, i.e., the hyperplane that contains $v^+$, $w^+$, and $z^+$. By Remark~\ref{vpm}, we know that
$$z^+ = \frac{1}{{a^+}^\intercal z} z = \frac{1}{{a^+}^\intercal z}\left(\lambda v + (1-\lambda) w\right) =  \underbrace{\lambda \frac{{a^+}^\intercal v}{{a^+}^\intercal z}}_{\rho} v^+ + \underbrace{(1-\lambda)  \frac{{a^+}^\intercal w}{{a^+}^\intercal z}}_{1-\rho} w^+$$
where we deduce $ \rho= \lambda \frac{{a^+}^\intercal v}{{a^+}^\intercal z}$ because ${a^+}^\intercal z = \lambda {a^+}^\intercal v + (1-\lambda) {a^+}^\intercal w$. 
In a similar way, when $0\notin P$ we can apply the same for $z^-$ we obtain 
\begin{equation}
    \varepsilon =  \lambda \frac{{a^-}^\intercal v}{{a^-}^\intercal z}  \qquad\text{and} \qquad  1-\varepsilon =  (1-\lambda) \frac{{a^-}^\intercal w}{{a^-}^\intercal z} \label{eq:varepsilon}
\end{equation}
where ${a^-}^\intercal \vec{x}=1$ is the \inhyperplane{} of $B$.
If $0\in P$, then $v^-=w^-=z^-=0$ and thus $\varepsilon$ can take any value in $[0,1]$. To simplify the proof, we abuse notation and consider $\frac{{a^-}^\intercal v}{{a^-}^\intercal z}=\frac{{a^-}^\intercal w}{{a^-}^\intercal z}=1$ for this case, so \eqref{eq:varepsilon} still holds.

Replacing the values of $\rho$ and $\varepsilon$ in \eqref{eq:despejada}, we obtain
\begin{align}
    g(z) &\leq \gamma \varepsilon f(v^-) +(1-\gamma) \rho f(v^+) + \gamma (1-\varepsilon) f(w^-) +   (1-\gamma)(1-\rho) f(w^+) \nonumber\\
 &=  \begin{multlined}[t]\gamma \lambda\frac{{a^-}^\intercal v}{{a^-}^\intercal z}f(v^-) + (1-\gamma) \lambda \frac{{a^+}^\intercal v}{{a^+}^\intercal z} f(v^+) \\
 + \gamma (1-\lambda)\frac{{a^-}^\intercal w}{{a^-}^\intercal z} f(w^-) +   (1-\gamma)(1-\lambda)\frac{{a^+}^\intercal w}{{a^+}^\intercal z} f(w^+)\end{multlined} \nonumber \\
 &=  \begin{multlined}[t]\lambda \left( \gamma \frac{{a^-}^\intercal v}{{a^-}^\intercal z}f(v^-) + (1-\gamma) \frac{{a^+}^\intercal v}{{a^+}^\intercal z} f(v^+) \right) \\ 
\shoveright{+ (1-\lambda) \left(\gamma \frac{{a^-}^\intercal w}{{a^-}^\intercal z} f(w^-) +   (1-\gamma)\frac{{a^+}^\intercal w}{{a^+}^\intercal z} f(w^+) \right).} \end{multlined} \label{eq:lastinequalityg}
\end{align}
%
What follows uses that $g$ is positively homogeneous in order to rewrite the last inequality. To do so, note that 
\begin{align}
\gamma \frac{{a^-}^\intercal v}{{a^-}^\intercal z} \cdot v^- + (1-\gamma) \frac{{a^+}^\intercal v}{{a^+}^\intercal z} \cdot v^+ &= \gamma \frac{1}{{a^-}^\intercal z} v + (1-\gamma) \frac{1}{{a^+}^\intercal z} v  = v  \label{eq:rewritev}
\end{align}
because $z=\gamma z^- + (1-\gamma) z^+ = \left(\gamma \frac{1}{{a^-}^\intercal z} + (1-\gamma) \frac{1}{{a^+}^\intercal z}\right) z $.
Let $\Omega= \gamma \frac{{a^-}^\intercal v}{{a^-}^\intercal z} +  (1-\gamma) \frac{{a^+}^\intercal v}{{a^+}^\intercal z}$ ---this is simply the sum of the weights in the leftmost linear combination of \eqref{eq:rewritev}.
By definition of $g$, and because we are assuming it to be positively homogeneous, we have that 
\begin{align*}
 g(v) &= g\left( \gamma \frac{{a^-}^\intercal v}{{a^-}^\intercal z} \cdot v^- + (1-\gamma) \frac{{a^+}^\intercal v}{{a^+}^\intercal z} \cdot v^+ \right) && \text{(due to \eqref{eq:rewritev})}\\
&= \Omega \cdot g\left( \frac{\gamma \frac{{a^-}^\intercal v}{{a^-}^\intercal z}}{\Omega }  \cdot v^- + \frac{(1-\gamma) \frac{{a^+}^\intercal v}{{a^+}^\intercal z}}{\Omega } \cdot v^+ \right) && \text{(pos. homog.)} \\
&= \Omega \cdot \left( \frac{\gamma \frac{{a^-}^\intercal v}{{a^-}^\intercal z}}{\Omega }  \cdot f(v^-) + \frac{(1-\gamma) \frac{{a^+}^\intercal v}{{a^+}^\intercal z}}{\Omega } \cdot f(v^+) \right) && \text{(def. of $g$)} \\
&=  \gamma \frac{{a^-}^\intercal v}{{a^-}^\intercal z} \cdot f(v^-) + (1-\gamma) \frac{{a^+}^\intercal v}{{a^+}^\intercal z} \cdot f(v^+) 
\end{align*}
A similar relation can be deduced for $w$ obtaining
\[
\gamma \frac{{a^-}^\intercal w}{{a^-}^\intercal z} w^- + (1-\gamma)\frac{{a^+}^\intercal w}{{a^+}^\intercal z} w^+ =\left(\gamma \frac{1}{{a^-}^\intercal z} + (1-\gamma) \frac{1}{{a^+}^\intercal z}\right) w = w \]
which implies
\[ g(w) =g\left(\gamma \frac{{a^-}^\intercal w}{{a^-}^\intercal z} w^- +   (1-\gamma)\frac{{a^+}^\intercal w}{{a^+}^\intercal z} w^+\right) =  \gamma \frac{{a^-}^\intercal w}{{a^-}^\intercal z} f(w^-) +   (1-\gamma)\frac{{a^+}^\intercal w}{{a^+}^\intercal z} f(w^+). \]
Using these expressions for $g(v)$ and $g(w)$ in \eqref{eq:lastinequalityg} we obtain 
\[ g(z) = g\left(\lambda v + (1-\lambda) w\right) \leq \lambda g(v) + (1-\lambda) g(w). \]
This shows that $g$ is convex in $B$.
\end{proof}

The previous lemma shows that $g$ is convex in each region $B\in\mathcal{B}$. Before moving to convexity toward $P$, we show differentiability of $g$ in each region and compute the corresponding gradient, which we rely on in the next section.
\begin{lemma}\label{lemma:gradient}
Let $B\in \mathcal{B}$ and $g: B \subseteq \mathbb{R}^n_+\rightarrow \mathbb{R}$ as defined in \eqref{eq:def}. 
%
%
Let ${a^\pm}^\intercal x = 1$ be the \inhyperplane{} and \outhyperplane{} of $B$. Then, $g$ is a differentiable function in $\mbox{int}(B)$. Moreover, the gradient is given by
\begin{equation}
    \label{eq:gradientinside}
   \nabla g(v) = \frac{\alpha_v}{{a^-}^\intercal v} \left(\delta^- a^-  + \nabla f(v^-) \right) + \frac{1-\alpha_v}{{a^+}^\intercal v} \left(\delta^+ a^+  + \nabla f(v^+) \right)
\end{equation}
where
\begin{align*}
    \delta^- & = \left(\nabla f(v^*)  -  \nabla f(v^-) \right)^\intercal v^-  \leq 0 \\
    \delta^+ & = \left(\nabla f(v^*) -  \nabla f(v^+)\right)^\intercal v^+  \geq 0 
\end{align*}
for a vector $v^*$ contained on the segment $[v^-,v^+]$, and $\frac{1}{{a^-}^\intercal v} \coloneqq 0$ in the case $0\in P$.
\end{lemma}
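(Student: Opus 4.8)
The plan is to leverage the explicit rational formulas of Remark~\ref{vpm}. On $\mathrm{int}(B)$ the maps $v\mapsto v^{\pm}=v/({a^{\pm}}^\intercal v)$ are smooth: the denominators are positive there, and since $B$ is full-dimensional one has $v^-\neq v^+$ on the interior (otherwise $v$ would lie in a face of $P$ of codimension at least two), so $\mu:=\frac{1}{{a^-}^\intercal v}-\frac{1}{{a^+}^\intercal v}\neq0$. As $\alpha_v$ is the unique solution of the linear relation~\eqref{eq:relAlphaHiperplanes}, it is smooth as well, and composing with the $C^1$ function $f$ shows $g$ is $C^1$ on $\mathrm{int}(B)$; this settles differentiability. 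For the gradient, apply the product and chain rules to $g(v)=\alpha_v f(v^-)+(1-\alpha_v)f(v^+)$:
\[
\nabla g(v)=\big(f(v^-)-f(v^+)\big)\,\nabla\alpha_v+\alpha_v\,(Dv^-)^\intercal\nabla f(v^-)+(1-\alpha_v)\,(Dv^+)^\intercal\nabla f(v^+).
\]

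Two elementary computations feed into this. Differentiating $v\mapsto v^{\pm}=v/({a^{\pm}}^\intercal v)$ gives the Jacobian $Dv^{\pm}=\frac{1}{{a^{\pm}}^\intercal v}\big(I-v^{\pm}(a^{\pm})^\intercal\big)$, hence
\[
(Dv^{\pm})^\intercal\nabla f(v^{\pm})=\frac{1}{{a^{\pm}}^\intercal v}\Big(\nabla f(v^{\pm})-\big((v^{\pm})^\intercal\nabla f(v^{\pm})\big)a^{\pm}\Big);
\]
and implicit differentiation of~\eqref{eq:relAlphaHiperplanes} gives $\nabla\alpha_v=\frac{1}{\mu}\big(\frac{\alpha_v}{({a^-}^\intercal v)^2}a^-+\frac{1-\alpha_v}{({a^+}^\intercal v)^2}a^+\big)$. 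Substituting both into the display above writes $\nabla g(v)$ as a linear combination of $\nabla f(v^-),\nabla f(v^+),a^-,a^+$; the coefficients of $\nabla f(v^{\pm})$ come out exactly as $\frac{\alpha_v}{{a^-}^\intercal v}$ and $\frac{1-\alpha_v}{{a^+}^\intercal v}$ as in~\eqref{eq:gradientinside}, so the task reduces to showing the coefficient of $a^-$ equals $\frac{\alpha_v}{{a^-}^\intercal v}\delta^-$ and that of $a^+$ equals $\frac{1-\alpha_v}{{a^+}^\intercal v}\delta^+$.

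This identification is the crux. With $r:={a^-}^\intercal v/{a^+}^\intercal v$ one has $v^+=r\,v^-$, so $v^+-v^-=(r-1)v^-$, and a short calculation gives $\mu\,{a^-}^\intercal v=1-r$ and $\mu\,{a^+}^\intercal v=(1-r)/r$. Collecting terms, the coefficient of $a^-$ becomes $\frac{\alpha_v}{{a^-}^\intercal v}\big(\tfrac{f(v^+)-f(v^-)}{r-1}-(v^-)^\intercal\nabla f(v^-)\big)$ and that of $a^+$ becomes $\frac{1-\alpha_v}{{a^+}^\intercal v}\big(\tfrac{r(f(v^+)-f(v^-))}{r-1}-(v^+)^\intercal\nabla f(v^+)\big)$. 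Now apply the mean value theorem to $\phi(\theta):=f\big((1-\theta)v^-+\theta v^+\big)$ on $[0,1]$, which is concave by ray-concavity: there is $\theta^*\in(0,1)$ with $f(v^+)-f(v^-)=\phi'(\theta^*)=(r-1)(v^-)^\intercal\nabla f(v^*)$, where $v^*:=(1-\theta^*)v^-+\theta^*v^+\in[v^-,v^+]$. Since $(v^+)^\intercal\nabla f(v^*)=r(v^-)^\intercal\nabla f(v^*)$, the \emph{same} point $v^*$ collapses the $a^-$ coefficient to $\frac{\alpha_v}{{a^-}^\intercal v}(\nabla f(v^*)-\nabla f(v^-))^\intercal v^-=\frac{\alpha_v}{{a^-}^\intercal v}\delta^-$ and the $a^+$ coefficient to $\frac{1-\alpha_v}{{a^+}^\intercal v}(\nabla f(v^*)-\nabla f(v^+))^\intercal v^+=\frac{1-\alpha_v}{{a^+}^\intercal v}\delta^+$. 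I expect this bookkeeping --- and especially noticing that one mean value point serves both coefficients --- to be the main obstacle; everything before it is mechanical.

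It remains to record the signs and the boundary case $0\in P$. Since $\phi$ is concave and $C^1$, $\phi'$ is non-increasing; because $v^-=\frac{1}{r-1}(v^+-v^-)$, $v^+=\frac{r}{r-1}(v^+-v^-)$ and $r>1$, we may rewrite $\delta^-=\frac{1}{r-1}(\phi'(\theta^*)-\phi'(0))\le0$ and $\delta^+=\frac{r}{r-1}(\phi'(\theta^*)-\phi'(1))\ge0$. When $0\in P$ we adopt the stated convention $\frac{1}{{a^-}^\intercal v}:=0$: then $v^-=0$, relation~\eqref{eq:relAlphaHiperplanes} forces $1-\alpha_v={a^+}^\intercal v$, and repeating the computation (now with $f(0)$ constant and the segment $[0,v^+]$) yields $\nabla g(v)=\nabla f(v^+)+\big(f(v^+)-f(0)-(v^+)^\intercal\nabla f(v^+)\big)a^+$; the mean value theorem applied to $\theta\mapsto f(\theta v^+)$ identifies the bracket as $\delta^+=(\nabla f(v^*)-\nabla f(v^+))^\intercal v^+$ for some $v^*\in[0,v^+]$, while $\delta^-=(\nabla f(v^*)-\nabla f(0))^\intercal\cdot0=0$, matching~\eqref{eq:gradientinside} and its sign claims.
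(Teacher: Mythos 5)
Your proposal is correct and follows essentially the same route as the paper's proof: differentiate $g(v)=\alpha_v f(v^-)+(1-\alpha_v)f(v^+)$ via the product/chain rule, compute the Jacobian $Dv^{\pm}=\frac{1}{{a^{\pm}}^\intercal v}\bigl(I-v^{\pm}(a^{\pm})^\intercal\bigr)$ and $\nabla\alpha_v$ by implicit differentiation of \eqref{eq:relAlphaHiperplanes}, apply the mean value theorem along the segment $[v^-,v^+]$ to replace $f(v^-)-f(v^+)$ by $\nabla f(v^*)^\intercal(v^--v^+)$, and use ray-concavity (monotonicity of the directional derivative along the ray) for the signs of $\delta^{\pm}$. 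Your only additions --- checking $\frac{1}{{a^-}^\intercal v}\neq\frac{1}{{a^+}^\intercal v}$ on $\mathrm{int}(B)$ and spelling out the $0\in P$ case --- are welcome tidiness, not a different argument.
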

\begin{proof}
Consider $v \in \mbox{int}(B)$ arbitrary. In this proof, to aid the reader, we emphasize that $v^\pm$ and $\alpha_v$ are functions of $v$ by referring to them as $v^\pm(v)$ and $\alpha_v(v)$, respectively.

Since $g(v)=\alpha_v(v) f(v^-(v)) + (1-\alpha_v(v)) f(v^+(v))$ and $f$ is differentiable, $g$ is also differentiable in the interior of $B$. The gradient of $g$ is given by 
\begin{align}
\nabla g(v)&=\nabla\alpha_v(v)\cdot f(v^-(v)) + \alpha_v(v) \nabla f(v^-(v)) \notag \\ 
& \qquad + \nabla (1-\alpha_v(v)) \cdot f(v^+(v)) + (1-\alpha_v(v)) \nabla f(v^+(v)) \notag \\
    &=\nabla\alpha_v(v)\cdot  \left( f(v^-(v)) - f(v^+(v))\right)  + \alpha_v(v) \nabla f(v^-(v)) \notag \\
    &\qquad + (1-\alpha_v(v)) \nabla f(v^+(v)) \label{eq:nablag}
\end{align}
where 
\begin{equation}
\nabla f(v^\pm(v))= {\mathcal{D} v^\pm(v)}^\intercal \nabla f(v)\rvert_{v=v^\pm}  \label{eq:nablaf}
\end{equation}
and $\mathcal{D}v^\pm(v)$ is the Jacobian matrix of  $v^\pm(v)$.
Recall that we are assuming $v^\pm(v)$ intersects a facet of $P$ contained in an hyperplane of equation ${a^\pm}^\intercal \vec{x} = 1$. Hence, by Remark~\ref{vpm} and defining $\frac{1}{{a^-}^\intercal v}\coloneqq 0$ when $0\in P$,
\begin{equation}
v^\pm(v)=\frac{1}{{a^\pm}^\intercal v}v, \ \text{ so } \ \nabla\left(\frac{1}{{a^\pm}^\intercal v}\right) = \frac{-a^\pm }{({a^\pm}^\intercal v)^2}
\end{equation}
and
\begin{align*}
\mathcal{D} v^\pm(v) &= v \nabla\left(\frac{1}{{a^\pm}^\intercal v}\right)^\intercal
+ \frac{1}{{a^\pm}^\intercal v}I_n\\
&=  \frac{-v {a^\pm}^\intercal }{({a^\pm}^\intercal v)^2}
+ \frac{1}{{a^\pm}^\intercal v}I_n
=  \frac{-1}{{a^\pm}^\intercal v} \left(v^\pm(v) a^{\pm \intercal} 
- I_n \right).
\end{align*}
Replacing in \eqref{eq:nablaf} we obtain 
\begin{align*}
\nabla f(v^\pm(v))& = \left(\frac{-1}{{a^\pm}^\intercal v} \left(v^\pm(v) {a^{\pm}}^ \intercal - I_n \right)\right)^\intercal \nabla f(v^\pm(v)) \\
& = \frac{-1}{{a^\pm}^\intercal v} \left((\nabla f(v^\pm(v))^\intercal v^\pm(v)) a^\pm 
- \nabla f(v^\pm(v)) \right)
\end{align*}

Note that ${\nabla v^\pm(v)}^\intercal v =0$ and $\nabla f(v^\pm(v))^\intercal v =0$. This is expected because varying $v$ over its ray does not change the position of $v^\pm(v)$ nor the value of $f(v^\pm(v))$.
On the other hand, 
applying the gradient to \eqref{eq:relAlphaHiperplanes} we obtain
\begin{align} \label{eq:nablaAlpha}
  \nabla \alpha_v(v) & = \left(\frac{1}{{a^-}^\intercal v}-\frac{1}{{a^+}^\intercal v}\right)^{-1} \left(\alpha_v(v)\frac{a^-}{({a^-}^\intercal v)^2}  + (1-\alpha_v(v))  \frac{a^+}{({a^+}^\intercal v)^2}\right)
 \end{align}

Finally, the mean value theorem ensures there exist  $v^*$ on the segment $[v^-(v),v^+(v)]$ such that
\begin{equation}  \label{eq:nablaf*}
 f(v^-(v)) - f(v^+(v)) = \nabla f(v^*)^\intercal (v^-(v) - v^+(v)) = \nabla f(v^*)^\intercal v \left(\frac{1}{{a^-}^\intercal v}-\frac{1}{{a^+}^\intercal v}\right)  
\end{equation}
Grouping all the terms  into \eqref{eq:nablag}, we obtain an explicit formula for the gradient of $g$ at $v$ as
\begin{align*}
\nabla g(v)&=\nabla\alpha_v(v)\cdot  \left( f(v^-(v)) - f(v^+(v))\right)  + \alpha_v(v) \nabla f(v^-(v))  + (1-\alpha_v(v)) \nabla f(v^+(v)) \\
&= \left(\alpha_v(v)\frac{a^-}{({a^-}^\intercal v)^2}  + (1-\alpha_v(v))  \frac{a^+}{({a^+}^\intercal v)^2}\right) \nabla f(v^*)^\intercal v\\
&\qquad + \alpha_v(v) \left( \frac{-1}{{a^-}^\intercal v} \left( \left( \nabla f(v^-(v))^\intercal v^-(v) \right) a^- -  \nabla f(v^-(v)) \right)\right)\\
&\qquad + (1-\alpha_v(v)) \left(\frac{-1}{{a^+}^\intercal v} \left( \left( \nabla f(v^+(v))^\intercal v^+(v) \right) a^+ -  \nabla f(v^+(v)) \right)\right)\\
&= \frac{\alpha_v(v)}{{a^-}^\intercal v}\left( \left(\left(\nabla f(v^*)^\intercal v \right) \frac{1}{{a^-}^\intercal v} -\left(  \nabla f(v^-(v))^\intercal v^-(v) \right) \right) a^- +   \nabla f(v^-(v)) \right) \\
&\qquad + \frac{1-\alpha_v(v)}{{a^+}^\intercal v} \left( \left(\left(\nabla f(v^*)^\intercal v \right) \frac{1}{{a^+}^\intercal v} -\left(  \nabla f(v^+(v))^\intercal v^+(v) \right) \right) a^+ +  \nabla f(v^+(v)) \right)\\
&= \frac{\alpha_v(v)}{{a^-}^\intercal v} \left( \underbrace{\left(\left(\nabla f(v^*)  -  \nabla f(v^-(v)) \right)^\intercal v^-(v) \right)}_{\delta^-}  a^- +   \nabla f(v^-(v)) \right) \\
&\qquad +  \frac{1-\alpha_v(v)}{{a^+}^\intercal v} \left( \underbrace{\left(\left(\nabla f(v^*) -  \nabla f(v^+(v))\right)^\intercal v^+(v)  \right)}_{\delta^+} a^+ +   \nabla f(v^+(v)) \right)\\
&= \frac{\alpha_v(v)}{{a^-}^\intercal v} \left(\delta^- a^-  + \nabla f(v^-(v)) \right) + \frac{1-\alpha_v(v)}{{a^+}^\intercal v} \left(\delta^+ a^+  + \nabla f(v^+(v)) \right)
\end{align*}

Note that since $f$ is ray-concave, then it is concave on the segment $[v^-(v),v^+(v)]$, so for any $v'$ on the segment
\begin{align*}
\left(\nabla f(v^*) - \nabla f(v^-(v))\right)^\intercal v' &\leq 0\\
\left(\nabla f(v^*) - \nabla f(v^+(v))\right)^\intercal v' &\geq 0
\end{align*}
because $\vec{0}, v, v^+(v)$ and $v^-(v)$ are colinear. Therefore, $\delta^+\geq 0$ and $\delta^-\leq 0$.
\end{proof}

As a side note, in the last lemma we only used differentiability of $f$ to show the formula \eqref{eq:gradientinside}, therefore such formula is always valid for $g$ defined as \eqref{eq:def}. Ray-concavity of $f$ was only used to show the signs of $\delta^\pm$, and facet-convexity of $f$ was not needed.

\subsection{Convexity over the polytope}

We know provide the last step which proves that $g$ is convex in $P$.
%

\begin{lemma}
Let $g: P\subset \mathbb{R}^n_+\rightarrow \mathbb{R}$ as defined in \eqref{eq:def}.
%
%
If $g$ is convex over each region $B\in \mathcal{B}$, then it is convex in $P$.
\end{lemma}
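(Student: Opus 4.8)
The plan is to show that the piecewise description of $g$ glues into a globally convex function by exploiting that (i) $g$ is convex on each cell $B\in\mathcal B$, (ii) the cells $B$ form a polyhedral subdivision of $P$ whose interfaces are pieces of the hyperplanes through $0$ spanned by lower-dimensional faces of $P$, and (iii) $g$ is globally $C^1$, i.e., the gradients of neighbouring pieces match on the shared interface. Once we have a continuous function that is convex on each piece of a polyhedral subdivision and whose one-sided gradients agree across every facet of the subdivision, a standard argument shows it is convex on the union. So the real content is the gradient-matching across interfaces, for which Lemma~\ref{lemma:gradient} is exactly the right tool.

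Concretely, I would proceed as follows. First, fix a point $v$ in the relative interior of an interface shared by two regions $B_{ij}$ and $B_{i'j'}$. Such an interface lies in a hyperplane through the origin of the form $H=\{x : c^\intercal x = 0\}$ generated by a lower-dimensional face of $P$; moving $v$ along $H$ keeps us on the common boundary, and moving off $H$ enters one region or the other. I would then invoke the explicit gradient formula \eqref{eq:gradientinside}: note that on the interface the points $v^-$ and $v^+$, the scalar $\alpha_v$, and the chosen mean-value point $v^*$ on the segment $[v^-,v^+]$ are the \emph{same} whether computed from the $B_{ij}$ side or the $B_{i'j'}$ side, because the ray through $v$ and its intersections with $\partial P$ do not depend on which cell we assign $v$ to. The only objects in \eqref{eq:gradientinside} that could differ are the normals $a^\pm$ describing the in/out-hyperplanes — but if $v^+$ lies on a lower-dimensional face common to $F_j$ and $F_{j'}$, then the two limiting gradients differ only by a multiple of the difference of these normals, and I will argue this difference is itself normal to the interface, hence contributes nothing to the directional derivative within $H$; and the component of the gradient normal to $H$ is pinned by continuity of $g$ together with the fact that the restriction of $g$ to any ray is affine near $v$ (so $\nabla g(v)^\intercal v$ is determined). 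The cleanest route is: show $g$ is $C^1$ across interfaces, i.e., the two expressions for $\nabla g(v)$ obtained from \eqref{eq:gradientinside} in the two adjacent cells actually coincide, using that the discrepancy terms involve $a^+ - a^{+\prime}$ (resp.\ $a^- - a^{-\prime}$) which annihilate the common face directions and that $\delta^\pm$ and $\nabla f(v^\pm)$ are intrinsic to the ray.

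With global $C^1$-ness established, the conclusion is the standard gluing lemma: let $v,w\in P$ and $z=\lambda v+(1-\lambda)w$; the segment $[v,w]$ meets finitely many cells $B_1,\dots,B_k$ in order, and $h(t) := g((1-t)v+tw)$ is convex on each subinterval (convexity of $g$ on each $B_\ell$) and differentiable at the breakpoints (global $C^1$-ness), hence $h'$ is nondecreasing on all of $[0,1]$, so $h$ is convex on $[0,1]$, giving $g(z)\le\lambda g(v)+(1-\lambda)g(w)$. I would state and use: a one-variable function that is convex on each piece of a finite partition of an interval into subintervals and is differentiable at the interior breakpoints is convex on the whole interval (its derivative is nondecreasing on each piece and continuous at the breakpoints, hence nondecreasing overall).

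The main obstacle is the gradient-matching step across interfaces. One must handle several configurations: an interface between $B_{ij}$ and $B_{ij'}$ (same in-facet, adjacent out-facets), between $B_{ij}$ and $B_{i'j}$, and — when $0\notin P$ — the possibility that along an interface $v^-$ or $v^+$ slides onto a lower-dimensional face and the choice of bounding hyperplane is genuinely ambiguous. The key facts that make it work are that $\nabla f(v^\pm)$ in \eqref{eq:gradientinside} denotes the true gradient of $f$ at the boundary point (independent of the cell), that $\delta^\pm$ depends only on $v^*, v^\pm$ (also intrinsic), and that any admissible choice of normal $a^+$ for the out-hyperplane of a cell satisfies $(a^+)^\intercal v^+ = 1$ and agrees with the others up to vectors vanishing on the shared face — so the ambiguous terms $\delta^+ a^+$ differ only in directions orthogonal to the interface, where matching is instead forced by continuity of $g$. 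I would also double-check the boundary case $0\in P$, where $v^-=0$ and the $\frac{1}{a^{-\intercal}v}$ terms drop out by the convention in Lemma~\ref{lemma:gradient}, so only the out-hyperplane matching remains.
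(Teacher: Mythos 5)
Your overall skeleton (reduce to one dimension along a segment, use convexity on each cell plus a compatibility condition at the breakpoints) is sound, but the compatibility condition you propose --- that $g$ is globally $C^1$, with the two gradient formulas from \eqref{eq:gradientinside} coinciding on each interface --- is false, and this is where the proof breaks. Take $f(x,y)=-xy$ on $[0,1]^2$ (ray-concave, linear on the facets, $f(0)=0$). Then $g(v)={a^+}^\intercal v\cdot f(v^+)$ equals $-x$ on the cell $\{y\ge x\}$ and $-y$ on $\{y\le x\}$, i.e.\ $g=\max(-x,-y)$, which has a genuine kink along $y=x$: the one-sided gradients are $(-1,0)$ and $(0,-1)$. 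Your argument that the normal component of the gradient is ``pinned by continuity of $g$ and by $g$ being affine on rays'' does not work: continuity never determines a normal derivative, and the radial direction lies \emph{inside} the interface (the interface is a hyperplane through the origin containing $v$), so affineness along rays only constrains a tangential component. What is true, and what the paper proves, is the weaker one-sided inequality $\nabla g_{B_s}(v)^\intercal d\le\nabla g_{B_t}(v)^\intercal d$ for $d$ pointing from $B_s$ into $B_t$; that is, the kink is a convex kink.

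To get that inequality you need two ingredients absent from your proposal: the signs $\delta^-\le 0$ and $\delta^+\ge 0$ from Lemma~\ref{lemma:gradient} (this is precisely where ray-concavity of $f$ enters --- note your argument never uses it), and the sign of $(a_s^\pm-a_t^\pm)^\intercal d$, which comes from convexity of $P$: since $(v+\varepsilon d)^+$ lies in the facet of $B_t$ and $P\subseteq\{x:{a_s^+}^\intercal x\le 1\}$, one gets ${a_s^+}^\intercal(v+\varepsilon d)\le{a_t^+}^\intercal(v+\varepsilon d)$ and hence $(a_s^+-a_t^+)^\intercal d\le 0$ (with the reverse sign for $a^-$). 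Combining these with the gradient-difference formula yields $\nabla(g_{B_s}-g_{B_t})(v)^\intercal d\le 0$, after which your one-dimensional gluing step goes through once you replace ``differentiable at the breakpoints, hence $h'$ continuous'' by ``the left derivative of $h$ at each breakpoint is at most the right derivative'', which still forces the derivative to be nondecreasing and $h$ to be convex. As written, however, the central claim of your proof is contradicted by the simplest example in the paper, so the gap is essential.
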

\begin{proof}
Our strategy to show convexity is to show \emph{mid-point local convexity}, that is, for each $v\in P$, we show there is a neighborhood of $v$ where $g$ is mid-point convex. We remind the reader that mid-point convexity reads
\[g\left(\frac{1}{2}(v_1 + v_2)\right)\leq \frac{1}{2}\left(g(v_1)+g(v_2)\right) \qquad \forall v_1,v_2\in P.\]
Mid-point convexity does not always imply convexity, but in this case it suffices as the function $g$ is continuous. Therefore, establishing local mid-point convexity implies local convexity \cite{jensen1905om}. And since local convexity implies convexity (see e.g. \cite{li2010some}), we conclude that $g$ is convex.

We now proceed to proving local mid-point convexity of $g$. Let us consider $v\in P$, $d\in \mathbb{R}^n$ and $\varepsilon > 0$. We would like to show that
\begin{equation}
    \label{eq:midpoint}
    g(v) \leq \frac{1}{2}(g(v-\varepsilon d) + g(v+\varepsilon d) ). 
\end{equation}
If $v \pm \varepsilon d \in \mbox{int}(B)$ the inequality follows from convexity of $g$ within a region. Therefore, we may assume $v\in B_s \cap B_t$, $v-\varepsilon d\in B_s$ and $v+\varepsilon d\in B_t$ for some $B_s,B_t \in \mathcal{B}$.



Let ${a_s^+}^\intercal x = 1$ be the \outhyperplane{} of $B_s$, and ${a_s^-}^\intercal x = 1$ be its \inhyperplane{}. Similarly, we define $a^\pm_t$. Thus, $v^\pm = \frac{1}{{a_s^\pm}^\intercal v} v = \frac{1}{{a_t^\pm}^\intercal v} v$. Let $\nabla g_{B_s}$ and $\nabla g_{B_t}$ be the gradients of $g$ in $\mbox{int}(B_s)$ and $\mbox{int}(B_t)$ respectively (see Lemma \ref{lemma:gradient}). Since these gradients are continuous, we can extend their formula \eqref{eq:gradientinside} to $B_s$ and $B_t$. From here, we obtain
\begin{align}
\nabla (g_{B_s}(v)-g_{B_t}(v)) &= \frac{\alpha_v}{{a_s^-}^\intercal v} \delta^- (a_s^- - a_t^-) + \frac{1-\alpha_v}{{a_s^+}^\intercal v} \delta^+ (a_s^+ - a_t^+)  
\end{align}
%
%
Now we focus on showing that $\nabla g_{B_s}(v)^\intercal d \leq \nabla g_{B_t}(v)^\intercal d$. Since $v+\varepsilon d \in B_t$,
\[ (v+\varepsilon d )^\pm = \frac{v+\varepsilon d}{{a_t^\pm}^\intercal (v+\varepsilon d)}.\]

We start exploring the facet contained in ${a_t^+}^\intercal x=1$. By convexity of the polytope $P$,  we know that ${a_s^+}^\intercal (v+\varepsilon d )^+ \leq 1$. Hence, 
${a_s^+}^\intercal (v+\varepsilon d ) \leq {a_t^+}^\intercal (v+\varepsilon d)$
and since ${a_s^+}^\intercal v = {a_t^+}^\intercal v$  we conclude that
\[ (a_s^+ - a_t^+)^\intercal d \leq 0. \]
In a similar way, for the facet contained in ${a_t^-}^\intercal x=1$, by convexity of the polytope we get that ${a_s^-}^\intercal (v+\varepsilon d )^- \geq 1$. So, ${a_s^-}^\intercal (v+\varepsilon d ) \geq {a_t^-}^\intercal (v+\varepsilon d)$ and we conclude that 
\[ (a_s^- - a_t^-)^\intercal d \geq 0. \]
%
As $\delta^-\leq 0$ and $\delta^+\geq 0$ (Lemma \ref{lemma:gradient}), we obtain that
\[ \nabla (g_{B_s}(v)-g_{B_t}(v))^\intercal d = \frac{\alpha_v}{{a_s^-}^\intercal v} \underbrace{\delta^- (a_s^- - a_t^-)^\intercal d}_{\leq 0} + \frac{1-\alpha_v}{{a_s^+}^\intercal v} \underbrace{\delta^+ (a_s^+ - a_t^+)^\intercal d}_{\leq 0} \leq 0.\]
so we conclude that
\[ \nabla g_{B_s}(v)^\intercal d \leq \nabla g_{B_t}(v)^\intercal d  \]
%


%
Finally, we can use the first order characterization of convexity within each region and obtain
\begin{align*}
  g(v+\varepsilon d) = g_{B_t}(v+\varepsilon d) \geq g(v) + \varepsilon \nabla g_{B_t}(v)^\intercal d \geq g(v) + \varepsilon \nabla g_{B_s}(v)^\intercal d\\
  g(v-\varepsilon d) = g_{B_s}(v-\varepsilon d) \geq g(v) - \varepsilon \nabla g_{B_s}(v)^\intercal d \geq g(v) - \varepsilon \nabla g_{B_t}(v)^\intercal d
\end{align*}
These two inequalities imply \eqref{eq:midpoint}. This completes the proof of local mid-point convexity of $g$ which, as discussed at the beginning of this proof, implies convexity of $g$ in $P$.
\end{proof}

Note that, similarly to Lemma \ref{lemma:gradient}, the latter proof does not explicitly rely on facet-convexity. The result mainly uses that $g$ is convex on each region and that $f$ is ray-concave (in order to use the signs of $\delta^\pm$ in the gradient formula).

Knowing that $g$ defines a convex function over the domain, we can prove our main theorem, showing that it corresponds to the convex envelope of $f$ over the polytope $P$.

\begin{proof}[Theorem~\ref{thm:main}]
By previous lemma, we know that $g$ is a convex function over the domain $P$. We show that $g$ is an underestimator of $f$, that is, $g(v) \leq f(v)$ for all $v\in P$. For $v=0$ it clearly holds.
If $v\neq 0$, $v\in P$ implies that $\alpha_v \in [0,1]$. Additionally, since $f$ is concave over $[v^-,v^+]$ we know that 
\[ f(v) = f(\alpha_v v^- + (1-\alpha_v) v^+) \geq \alpha_v f(v^-) + (1-\alpha_v) f(v^+) = g(v).\]

Finally, we argue why $g$ is the largest convex function that underestimates $f$. Let $h$ another convex function that underestimates $f$ and let $v\in P$ such that $h(v)>g(v)$.
Restricted to the segment $[v^-,v^+]$, the function $h$ is also convex. But this is a contradiction, because $f$ is concave on  $[v^-,v^+]$, so the largest convex function underestimating $f$ on this segment is the line interpolating $f(v^-)$ and $f(v^+)$, which is exactly $g$. 
\end{proof}

\subsection{On the positively homogeneous condition}\label{sec:positivehom}

In this section we present characterizations for when the function $g$ constructed in \eqref{eq:def} is positively homogeneous.
\begin{lemma}
%
If $0\in P$, then $g$ is positively homogeneous if and only if $f(0)=0$. In this case, 
\[ g(v) = {a^+}^\intercal v \cdot   f(v^+),\]
where ${a^{+}}^\intercal x = 1$ is the \outhyperplane{} of the region $B\ni v$ (see Remark \ref{vpm}).
\end{lemma}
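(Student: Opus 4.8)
The plan is to exploit the special structure that $0 \in P$ gives to the region decomposition: when the origin is in $P$, every region is of the form $B_{0j}$, so $v^- = 0$ for all $v$, and hence $g(v) = \alpha_v f(0) + (1-\alpha_v) f(v^+)$. The first step is therefore to rewrite $g$ using this observation together with Remark~\ref{vpm}. Since $0 = v^-$, the relation $v = \alpha_v v^- + (1-\alpha_v) v^+$ becomes $v = (1-\alpha_v) v^+$, and since $v^+ = \frac{1}{{a^+}^\intercal v} v$ on the region whose \outhyperplane{} is ${a^+}^\intercal x = 1$, we get $1 - \alpha_v = {a^+}^\intercal v$, i.e. $\alpha_v = 1 - {a^+}^\intercal v$. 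Substituting gives
\[
g(v) = (1 - {a^+}^\intercal v)\, f(0) + {a^+}^\intercal v \cdot f(v^+).
\]

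For the ``if'' direction, assume $f(0) = 0$. Then the formula above collapses to $g(v) = {a^+}^\intercal v \cdot f(v^+)$, which is the claimed closed form. To see that this is positively homogeneous, take $t > 0$ and note that $tv$ lies on the same ray as $v$, hence in the same region $B_{0j}$, so $(tv)^+ = v^+$ and the governing hyperplane is still ${a^+}^\intercal x = 1$; therefore $g(tv) = {a^+}^\intercal (tv) \cdot f((tv)^+) = t\, {a^+}^\intercal v \cdot f(v^+) = t\, g(v)$. (One should note that $g(0) = 0$ as well, consistent with positive homogeneity — directly from $g(0) = f(0) = 0$.) For the ``only if'' direction, suppose $g$ is positively homogeneous. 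Evaluate at $v = 0$: positive homogeneity forces $g(0) = 0$ (e.g. $g(0) = g(t\cdot 0) = t\,g(0)$ for all $t>0$ implies $g(0)=0$), but $0^+ = 0$ and $\alpha_0 = 1$ give $g(0) = f(0)$, so $f(0) = 0$.

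I expect essentially no serious obstacle here — the main point requiring a little care is the bookkeeping around $\alpha_v$ when $v^- = 0$: in that degenerate case $\alpha_v$ is not uniquely pinned down by $v = \alpha_v \cdot 0 + (1-\alpha_v) v^+$ alone unless $v \ne 0$, so one should either restrict attention to $v \ne 0$ when solving for $\alpha_v$ and handle $v = 0$ separately, or invoke the convention $\frac{1}{{a^-}^\intercal v} := 0$ already adopted in the paper. The other mild subtlety is that the constant $a^+$ in the closed-form expression depends on which region $B\ni v$ one is in, so the stated formula is region-wise; but this matches the phrasing of the lemma and is exactly the same piecewise structure used throughout Section~\ref{sec:positivehom}, so no further argument is needed.
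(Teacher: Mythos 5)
Your proposal is correct and follows essentially the same route as the paper: derive $1-\alpha_v = {a^+}^\intercal v$ from $v = (1-\alpha_v)v^+$ and Remark~\ref{vpm}, rewrite $g(v) = (1-{a^+}^\intercal v)f(0) + {a^+}^\intercal v \cdot f(v^+)$, get $f(0)=0$ from $g(0)=0$ in one direction, and use $(\lambda v)^+ = v^+$ for the homogeneity of ${a^+}^\intercal v \cdot f(v^+)$ in the other. The extra care you take with $\alpha_v$ at $v=0$ is a reasonable refinement but does not change the argument.
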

\begin{proof}
If $0\in P$ then $v = \alpha_v \cdot 0 + (1-\alpha_v) v^+ = \frac{1-\alpha_v}{{a^+}^\intercal v} v$, so 
\[ g(v) = (1-{a^+}^\intercal v) f(0) + {a^+}^\intercal v \cdot f(v^+)\]
If $g$ is positively homogeneous, then $g(0) = 0 = f(0)$. To prove the other direction, if $f(0)=0$ then $g(v) = {a^+}^\intercal v \cdot   f(v^+)$, which is homogeneous because for any $\lambda>0$ such that $\lambda v\in P$, $(\lambda v)^+ = v^+$ so $g(\lambda v) = {a^+}^\intercal (\lambda v) \cdot f(v^+) = \lambda g(v)$. 
\end{proof}
As mentioned in Remark~\ref{remark:first}, the condition $f(0) = 0$ is not restrictive in the construction of convex envelopes when $0\in P$. If $f(0) \neq 0$, it suffices to define $\hat{f} = f - f(0)$ and use our construction to derive $\vex{\hat{f}}$. The desired convex envelope simply follows from noting that $\vex{f} = \vex{\hat{f}}+f(0)$. We illustrate the use of this transformation in the upcoming examples section.
\begin{lemma}
%
If $0 \notin P$, then $g$ is positively homogeneous iff, for every $v\in P$, ${{a^-}^\intercal v} \cdot f(v^-) = {{a^+}^\intercal v} \cdot f(v^+)$, where ${a^{\pm}}^\intercal x = 1$ are the \inhyperplane{} and \outhyperplane{} of a region $B\ni v$ (see Remark \ref{vpm}). In this case,
\[ g(v) = {{a^-}^\intercal v} \cdot f(v^-) = {{a^+}^\intercal v} \cdot f(v^+). \]
\end{lemma}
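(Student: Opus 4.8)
The plan is to reduce everything to the two explicit facts recorded in Remark~\ref{vpm}: within a region $B\ni v$ one has $v^\pm = v/({a^\pm}^\intercal v)$, and the barycentric weight satisfies $\frac{\alpha_v}{{a^-}^\intercal v}+\frac{1-\alpha_v}{{a^+}^\intercal v}=1$, i.e.\ \eqref{eq:relAlphaHiperplanes}. The one extra observation needed is that \eqref{eq:def} forces $g(v^-)=f(v^-)$ and $g(v^+)=f(v^+)$, since the endpoints of $[v^-,v^+]$ carry weights $\alpha=1$ and $\alpha=0$ respectively, so no interpolation occurs there. Finally, $v^-$ and $v^+$ lie on a common ray through the origin, say $v^\pm=\alpha^\pm v$ with $\alpha^\pm>0$ (positivity because $0\notin P$); comparing with $v^\pm = v/({a^\pm}^\intercal v)$ yields $\alpha^\pm = 1/({a^\pm}^\intercal v)$.

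For the forward implication, I would assume $g$ positively homogeneous and fix $v\in P$. Then $v^- = (\alpha^-/\alpha^+)\,v^+$ with $\alpha^-/\alpha^+\in(0,1]$ a nonnegative scalar, so positive homogeneity gives $f(v^-) = g(v^-) = (\alpha^-/\alpha^+)\,g(v^+) = (\alpha^-/\alpha^+)\,f(v^+)$. Substituting $\alpha^\pm = 1/({a^\pm}^\intercal v)$ and clearing denominators produces exactly ${a^-}^\intercal v\cdot f(v^-) = {a^+}^\intercal v\cdot f(v^+)$.

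For the converse, I would assume the displayed equality and write $C(v)$ for the common value, so $f(v^\pm) = C(v)/({a^\pm}^\intercal v)$. Plugging into \eqref{eq:def} and using \eqref{eq:relAlphaHiperplanes},
\begin{align*}
g(v) = \alpha_v f(v^-) + (1-\alpha_v) f(v^+) = C(v)\left(\frac{\alpha_v}{{a^-}^\intercal v} + \frac{1-\alpha_v}{{a^+}^\intercal v}\right) = C(v),
\end{align*}
which is the claimed closed form. Positive homogeneity is then immediate: for $\lambda>0$ with $\lambda v\in P$, the ray through $\lambda v$ equals the ray through $v$, hence $(\lambda v)^\pm = v^\pm$, the point $\lambda v$ lies in the same region $B$, and $g(\lambda v) = {a^-}^\intercal(\lambda v)\cdot f(v^-) = \lambda\,{a^-}^\intercal v\cdot f(v^-) = \lambda g(v)$.

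The computations are routine; the only point I expect to need care is well-definedness. If $v$ lies in the closure of several regions of $\mathcal{B}$ — equivalently $v^-$ or $v^+$ sits on more than one facet of $P$ — the normals $a^\pm$ are not unique. But for any facet $F_i\ni v^-$ the equation ${a^-}^\intercal x=1$ forces ${a^-}^\intercal v = {a^-}^\intercal(v^-/\alpha^-) = 1/\alpha^-$, which depends only on the ray parameter and not on the chosen facet (similarly for $a^+$); hence both the hypothesis and the value $C(v)$ are unambiguous, so the statement is meaningful as written. (The lone degenerate case $v^-=v^+$, where the ray meets $P$ in a single point, has $v=v^-=v^+$ on a facet and every quantity collapses to $f(v)$.) I anticipate this bookkeeping about non-unique supporting hyperplanes to be the only mildly delicate part of the argument.
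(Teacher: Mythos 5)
Your proposal is correct and follows essentially the same route as the paper: both directions rest on $v^\pm = v/({a^\pm}^\intercal v)$, the endpoint identity $g(v^\pm)=f(v^\pm)$, and relation \eqref{eq:relAlphaHiperplanes}, with only cosmetic differences (you compare $g(v^-)$ to $g(v^+)$ directly where the paper compares $g(v)$ to each, and your converse computation is a slightly more compact form of the paper's). The remark on well-definedness of ${a^\pm}^\intercal v$ when $v^\pm$ lies on several facets is a sensible addition that the paper leaves implicit.
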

\begin{proof}
Since $v^\pm = \frac{1}{{a^\pm}^\intercal v} v$, if $g$ is homogeneous then 
 \[ g(v) = g\left( ({a^\pm}^\intercal v)\cdot  v^\pm\right) = {a^\pm}^\intercal v \cdot  g(v^\pm) = {{a^\pm}^\intercal v}\cdot  f(v^\pm).\]
For the other direction, if ${{a^-}^\intercal v} \cdot f(v^-) = {{a^+}^\intercal v} \cdot f(v^+)$, by \eqref{eq:relAlphaHiperplanes} we obtain
\begin{align*}
    g(v) &= \alpha_v f(v^-) + (1-\alpha_v) f(v^+) \\
    &= \alpha_v f(v^-) + \left( 1 - \frac{\alpha_v}{{a^-}^\intercal v} \right) ({a^+}^\intercal v) f(v^+) \\
    &= \alpha_v f(v^-) +  \left( 1 - \frac{\alpha_v}{{a^-}^\intercal v} \right) ({a^-}^\intercal v) f(v^-)\\
    &= \alpha_v f(v^-) +  ({a^-}^\intercal v) f(v^-) - \alpha_v f(v^-)\\
    &=  ({a^-}^\intercal v) f(v^-) =  ({a^+}^\intercal v) f(v^+)
\end{align*}
So, $g$ is homogeneous because for any $\lambda>0$ such that $\lambda v\in P$,  $g(\lambda v) = {a^\pm}^\intercal (\lambda v) \cdot f((\lambda v)^\pm) = \lambda ({a^\pm}^\intercal v) \cdot f(v^\pm) = \lambda g(v)$.
\end{proof}
We note that our results have an unexpected consequence: when $f$ is a homogeneous function, convexity of $f$ over the facets of $P$ imply convexity of $f$ over all $P$.
\begin{corollary}\label{cor:f_homogeneous}
Let $f:P\rightarrow \mathbb{R}$ is continuously differentiable and convex (concave) over the facets of $P$. If $f$ is positively homogeneous, then $f$ is convex (concave) over $P$.
\end{corollary}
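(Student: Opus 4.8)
The plan is to derive Corollary~\ref{cor:f_homogeneous} as a direct consequence of Theorem~\ref{thm:main} applied to $f$ and to $-f$. First I would treat the convex case. Suppose $f$ is continuously differentiable, positively homogeneous, and convex over each facet of $P$. The key observation is that for a positively homogeneous $f$, the interpolation function $g$ defined in \eqref{eq:def} coincides with $f$ itself: indeed, for any $v\in P$ with $v\neq 0$ we have $v=\alpha_v v^- + (1-\alpha_v) v^+$ with $v^\pm=\frac{1}{{a^\pm}^\intercal v}v$, and using positive homogeneity of $f$ one computes $g(v)=\alpha_v f(v^-)+(1-\alpha_v)f(v^+)={a^\pm}^\intercal v\cdot f(v^\pm)=f(v)$ (this is exactly the identity appearing in the two preceding lemmas on homogeneity, since a positively homogeneous function restricted to a ray is linear, hence its secant equals the function). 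In particular $g$ is itself positively homogeneous, so the hypotheses of Theorem~\ref{thm:main} are met, and we conclude $\vex{f}=g=f$. Since $\vex{f}$ is always convex, $f$ is convex over $P$.

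Second, for the concave case I would apply the convex case to $-f$: if $f$ is concave over the facets of $P$ then $-f$ is convex over the facets; $-f$ is still continuously differentiable and positively homogeneous; hence by the convex case $-f$ is convex over $P$, i.e., $f$ is concave over $P$.

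There are two small technical points to check rather than a genuine obstacle. One is that a positively homogeneous function is automatically ray-concave (indeed ray-affine): its restriction to $\{\alpha x:\alpha\geq 0\}\cap P$ is a segment of the line $\alpha\mapsto \alpha f(x)$ when $0\in P$, or more generally is affine in the ray parameter, so it is both convex and concave along rays; this is what lets us invoke Theorem~\ref{thm:main}, whose statement requires ray-concavity. The other point is the degenerate case $v=0$ when $0\in P$: positive homogeneity forces $f(0)=0$ (taking $\lambda\to 0^+$ along $f(\lambda v)=\lambda f(v)$), which is precisely the condition identified in the lemma on the $0\in P$ case guaranteeing $g$ is positively homogeneous, and one checks $g(0)=0=f(0)$ so the identity $g=f$ extends to all of $P$. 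If $0\notin P$ there is nothing extra to verify. The only mild subtlety is making sure the identity $g=f$ and the applicability of the theorem are stated uniformly across the cases $0\in P$ and $0\notin P$; but since both homogeneity lemmas already supply the needed equalities, the argument is essentially a two-line corollary and the ``hard part'' is merely bookkeeping, not mathematics.
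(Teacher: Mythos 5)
Your proposal is correct and follows essentially the same route as the paper: observe that positive homogeneity makes $f$ ray-linear, hence $g=f$, and then invoke Theorem~\ref{thm:main} to conclude $f=g=\vex{f}$ is convex. Your write-up is in fact more careful than the paper's (you explicitly verify ray-concavity, the homogeneity of $g$, and the $v=0$ case), and your reduction of the concave case to $-f$ matches the paper's remark that the concave case is ``almost identical.''
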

\begin{proof}
We show the proof for $f$ convex on the facets; the concave case is almost identical. 
If $f$ is positively homogeneous then in particular is ray-linear. Hence $f(v) = ({a^+}^\intercal v ) \cdot f(v^+) = g(v)$. In addition, since $f$ is convex on the facets of $P$, by Theorem~\ref{thm:main} $ g = \vex{f}$, so $f$ is convex over $P$.
\end{proof}

\section{Examples of ray-concave functions and their envelopes}


In this section, we provide the convex envelopes of various explicit functions. Some of these are new, and some have been provided in the literature before. In the latter case, our result provides new perspectives, and in some cases simpler derivations.

\begin{example} Consider the function $f(x,y)= -x\cdot y$, whose convex envelope over $[l_x,u_x]\times[l_y,u_y]$ is well-known. In order to construct its convex envelope using Theorem \ref{thm:main}, we first shift the domain by considering the function 
\[\hat{f}(x,y)= f(x+l_x,y+l_y) + l_x\cdot l_y = -(x+l_x)\cdot (y+l_y) + l_x\cdot l_y\]
over the box $[0, u_x-l_x]\times[0,u_y-l_y]$.
It is easy to verify that $\hat{f}$ is ray-concave and linear on the facet of any box $[0, u_x-l_x]\times[0,u_y-l_y]$. Theorem \ref{thm:main} implies that $\vex{\hat{f}}(v) = {a^+}^\intercal v \cdot \hat{f}(v^+)$ and thus 
\begin{align*}
    \vex{\hat{f}}(x,y)
    &= \begin{cases}
    \frac{y}{u_y - l_y} \cdot \hat{f}\left(x\cdot\frac{u_y - l_y}{y},y\cdot\frac{u_y - l_y}{y}\right)& \text{if } y \geq \frac{u_y-l_y}{u_x- l_x} x \\
    \frac{x}{u_x - l_x} \cdot \hat{f}\left(x\cdot\frac{u_x - l_x}{x},y\cdot\frac{u_x - l_x}{x}\right)  & \text{if } y \leq \frac{u_y-l_y}{u_x- l_x} x \\
    \end{cases} \\
    &=\begin{cases}
    -u_y x - l_x y & \text{if } y \geq \frac{u_y-l_y}{u_x- l_x} x \\
    -l_y x - u_x y & \text{if } y \leq \frac{u_y-l_y}{u_x- l_x} x 
    \end{cases}
\end{align*}
And since $\vex{f}(x,y) = \vex{\hat{f}}(x-l_x,y-l_y)-l_x\cdot l_y$, we obtain
\[ \vex{f}(x,y) = \begin{cases}
-u_y x - l_x y + l_x u_y & \text{if } y-l_y \geq \frac{u_y-l_y}{u_x- l_x} (x-l_x) \\
-l_y x - u_x y + l_y u_x & \text{if } y-l_y \leq \frac{u_y-l_y}{u_x- l_x} (x-l_x) 
\end{cases} \]
which corresponds to the McCormick envelopes for this function.
\end{example}

\begin{example} Let us consider the following example from \cite{locatelli2014convex}.
Let $f(x,y)=y/x$ and 
\[ P = \left\{ (x,y)\in \mathbb{R}^2 : -x + 2y \leq 2, 1 \leq x \leq 2, 0 \leq y \leq 2 \right\} \]
We shift the domain by considering the function as $\hat{f}(x,y) = f(x+1,y)$ and the polytope $\hat{P}=\{(x,y)\in \mathbb{R}^2 : (x+1,y)\in P\}$. 
Note that $\hat{f}(x,y)$ is ray-concave because $\hat{f}(x,\lambda x)=\lambda \frac{ x}{x+1}$ is concave for $x\geq 0$ and $\lambda\geq 0$. Convexity on the facets can be directly verified.
Applying Theorem~\ref{thm:main}, since the outer facets of $\hat{P}$ are $x=1$ and $-\tfrac{1}{3} x + \tfrac{2}{3} y = 1$, we obtain
\begin{align*}
    \vex{\hat{f}}(x,y) &=
    \begin{cases}
    (-\tfrac{1}{3} x + \tfrac{2}{3} y) \hat{f}\left(\frac{x}{-\tfrac{1}{3} x + \tfrac{2}{3} y}, \frac{y}{-\tfrac{1}{3} x + \tfrac{2}{3} y}\right) & \text{if } y \geq 2x \\
    x \hat{f}\left(\frac{x}{x}, \frac{y}{x}\right) & \text{if } y \leq 2x \\
    \end{cases} \\
    &=
    \begin{cases}
    y \frac{-\tfrac{1}{3} x + \tfrac{2}{3} y}{x+\left(-\tfrac{1}{3} x + \tfrac{2}{3} y\right)}  = y \frac{-x + 2 y}{2x+2 y} & \text{if } y \geq 2x \\
    \frac{1}{2} y & \text{if } y \leq 2x \\
    \end{cases} 
\end{align*}
Therefore, as $\vex{f}(x,y) = \vex{\hat{f}}(x-1,y)$ we obtain
\[
    \vex{f}(x,y) =    \begin{cases}
     y \frac{1-x + 2 y}{2(x+ y + 1)} & \text{if } y \geq 2(x-1) \\
    \frac{1}{2} y & \text{if } y \leq 2(x-1) \\
    \end{cases} 
\]
\end{example}

\begin{example}
Let us consider the function
\begin{equation}
     \label{eq:probfunction2D}
     f(x,y) = \frac{xy}{x+y-xy}
 \end{equation}
in a box $[0,u_x]\times [0,u_y]\subseteq [0,1]^2$. This function appears naturally in the context of network reliability optimization. In fact, if $X,Y$ are independent Bernoulli random variables indicating the current state of two serial component, with reliabilities $p_X:=\mathbb{P}(X=1)$ and $p_Y:=\mathbb{P}(Y=1)$ then 
 \[f(p_X,p_Y) = \mathbb{P}\left(X\cdot Y = 1 | X+Y \geq 1\right).\]
corresponds to the resulting reliability of a \emph{degree-2} reduction~\cite{satyanarayana1985linear}.

We compute the concave envelope of \eqref{eq:probfunction2D} via the convex envelope of $\hat{f} = - f$. 
The function $\hat{f}$ can be directly verified to be convex on the facets of $[0,u_x]\times [0,u_y]$. For instance
\[h(x) = \hat{f}(x,u_y) = -\frac{xu_y}{x+u_y-xu_y}, \]
and a simple calculation shows
\[h''(x) = \frac{2(1-u_y)u_y^2}{(1-(1-u_y)(1-x))^3} \geq 0.\]
As for ray-concavity, we compute
\[
\hat{f}(x,\lambda x)  = -\frac{\lambda x}{1+\lambda - \lambda x} \quad \Rightarrow \quad 
\frac{\partial^2 \hat{f}}{\partial x^2}(x,\lambda x) = -\frac{2 \lambda^2 (1+\lambda)}{(1+\lambda(1-x))^3},
\]
therefore $\frac{\partial^2 f}{\partial x^2}(x,\lambda x) \geq 0$ for $x\leq 1$ and $\lambda \geq 0$. 
By Theorem \ref{thm:main}, the concave envelope of $f(x,y)$, denoted $\ave{f}$, is given by
\begin{align*}
    \ave{f}(x,y) = -\vex{\hat{f}}(x,y) &=  -({a^+}^\intercal v)  \hat{f}(v^+_x,v^+_y) \\
    &= \begin{cases}
    \frac{y}{u_y} \frac{x\frac{u_y}{y}\cdot u_y}{x\frac{u_y}{y} + u_y - x\frac{u_y}{y}\cdot u_y}   & \text{if }y \geq \frac{u_y}{u_x} x \\
    \frac{x}{u_x} \frac{u_x\cdot y\frac{u_x}{x}}{u_x + y\frac{u_x}{x} - u_x\cdot y\frac{u_x}{x}}   & \text{if }y \leq \frac{u_y}{u_x} x \\
    \end{cases} \\
    &=\begin{cases}
    \frac{x\cdot y}{x+y-x\cdot u_y}  &\text{if }y \geq \frac{u_y}{u_x} x \\
    \frac{x\cdot y}{x+y-u_x\cdot y}  & \text{if }y\leq \frac{u_y}{u_x} x \\
    \end{cases}
\end{align*}

Note that this procedure also computes, for free, the concave envelope of $f$ on the non-rectangular polytopes $\{(x,y)\in [0,u_x]\times [0,u_y]\, :\, y\leq \frac{u_y}{u_x} x\}$ and $\{(x,y)\in [0,u_x]\times [0,u_y]\, :\, y\geq \frac{u_y}{u_x} x\}$.
\end{example}

\begin{example} Let us consider the function
\[f(x,y)=-\frac{y \left(x^3 y^2+2 x^4 y-3 x^3 y-x^2 y+x^5-3 x^4+2 x^3-2 x y^2-y^3\right)}{x (x+y)^2} \]
over the region  $P = \left\{ (x,y)\in \mathbb{R}^2_+ : 1 \leq x+y \leq 2 \right\}$. 

Note that $f(x,0)=0$, $\lim_{\epsilon\to 0}f(\epsilon,y)$ is  linear, $f(x,1-x) = (x-1)^2/x$ and $f(x,2-x) = (x-2)^2/x$, so $f$ is convex on the facets. On the other hand, over the ray $y=\lambda x$ for $\lambda \geq 0$ we obtain
\[ \frac{\partial^2 f}{\partial x^2} (x,\lambda x) = -6\lambda \frac{(1+\lambda)x - 1}{1+\lambda}\] 
If $(x,\lambda x)\in P$ then $x\in [\tfrac{1}{1+\lambda}, \tfrac{2}{1+\lambda}]$, so $f$ is ray-concave for any $\lambda\geq 0$.  Applying  Theorem~\ref{thm:main}, we obtain:
\begin{align*}
    \vex{f}(x,y) &= (2-x-y)\cdot f\left(\frac{x}{x+y},\frac{y}{x+y}\right) + (x+y-1) \cdot  f\left(\frac{2x}{x+y},\frac{2y}{x+y}\right) \\
    &= (2-x-y)\cdot \frac{y^2}{x (x + y)}+ (x+y-1) \cdot\frac{2y^2}{x (x + y)} = \frac{y^2}{x}
\end{align*}
which corresponds to the convex envelope because ${y^2}/{x}$ is positively homogeneous.
\end{example}

The following example shows how Corollary~\ref{cor:f_homogeneous} can be used to prove the convexity of positively homogeneous functions.

\begin{example}
Let $f$ be a 3-dimensional Cobb-Douglas function \[f(x_1,x_2,x_3) = A x_1^{\alpha_1}x_2^{\alpha_2}x_3^{\alpha_3}\]
where $A,\alpha_1,\alpha_2,\alpha_3>0$, $\alpha_1+\alpha_2+\alpha_3= 1$ and $\vec{x}\in\mathbb{R}^3_+$. 

It is known that the 2-dimensional Cobb-Douglas function is concave if $\alpha_i+\alpha_j< 1$, hence  $f$ is concave over the facets of the box $P=[l_1,u_1]\times[l_2,u_2]\times[l_3,u_3] \subset \mathbb{R}^3_+$.

Since $\sum_{i=1}^3 \alpha_i= 1$, $f$ is positively homogeneous, so by Corollary~\ref{cor:f_homogeneous} we conclude that $f$ is concave over $P$.
\end{example}

\bibliographystyle{amsplain}      
\bibliography{convexEnvelopes}
\end{document}